\newcommand{\subalign}[1]{%
  \vcenter{%
    \Let@ \restore@math@cr \default@tag
    \baselineskip\fontdimen10 \scriptfont\tw@
    \advance\baselineskip\fontdimen12 \scriptfont\tw@
    \lineskip\thr@@\fontdimen8 \scriptfont\thr@@
    \lineskiplimit\lineskip
    \ialign{\hfil$\m@th\scriptstyle##$&$\m@th\scriptstyle{}##$\hfil\crcr
      #1\crcr
    }%
  }%
}
\title{A Bonus-Malus Framework for Cyber Risk Insurance and Optimal Cybersecurity Provisioning}
\author[1]{Qikun Xiang}
\author[1]{Ariel Neufeld}
\author[2]{Gareth {W.} Peters}
\author[3]{Ido Nevat}
\author[4]{Anwitaman Datta}
\affil[1]{{\small{Division of Mathematical Sciences, Nanyang Technological University, Singapore}}}
\affil[2]{{\small{Department of Statistics and Applied Probability, University of California Santa Barbara, USA}}}
\affil[3]{{\small{TUMCREATE, Singapore}}}
\affil[4]{{\small{School of Computer Science and Engineering, Nanyang Technological University, Singapore}}}
\date{}
\newcommand{\Tb}{\mathrm{b}}
\newcommand{\Tc}{\mathrm{c}}
\newcommand{\Td}{\mathrm{d}}
\newcommand{\Te}{\mathrm{e}}
\newcommand{\Tf}{\mathrm{f}}
\newcommand{\Tg}{\mathrm{g}}
\newcommand{\Ti}{\mathrm{i}}
\newcommand{\Tn}{\mathrm{n}}
\newcommand{\To}{\mathrm{o}}
\newcommand{\Tp}{\mathrm{p}}
\newcommand{\Tr}{\mathrm{r}}
\newcommand{\Ts}{\mathrm{s}}
\newcommand{\Tt}{\mathrm{t}}
\newcommand{\Tu}{\mathrm{u}}
\newcommand{\CA}{\mathcal{A}}
\newcommand{\CB}{\mathcal{B}}
\newcommand{\CD}{\mathcal{D}}
\newcommand{\CF}{\mathcal{F}}
\newcommand{\CI}{\mathcal{I}}
\newcommand{\CL}{\mathcal{L}}
\newcommand{\CM}{\mathcal{M}}
\newcommand{\CV}{\mathcal{V}}
\newcommand{\CW}{\mathcal{W}}
\newcommand{\FB}{\mathfrak{B}}
\newcommand{\BBV}{\mathbb{V}}
\newcommand{\R}{\mathbb{R}} % real number
\newcommand{\Z}{\mathbb{Z}} % integer
\newcommand{\N}{\mathbb{N}} % natural number
\newcommand{\INDI}{\mathbbm{1}} % indicator function
\newcommand{\DIFF}{\Td} % differentiation sign
\newcommand{\DIFFX}[1]{\,\Td{#1}} % differentiation with a measure or a variable name
\newcommand{\DIFFM}[2]{\,{#1}\!\left({#2}\right)\!}% % differentiation with a measure and a variable
\newcommand{\PROB}{\mathbb{P}} % probability measure
\newcommand{\EXP}{\mathbb{E}} % expectation
\DeclareMathOperator*{\argmin}{arg\,min} % argument that minimizes
\newtheorem{theorem}{Theorem}[section]
\newtheorem{lemma}[theorem]{Lemma}
\newtheorem{definition}[theorem]{Definition}
\newtheorem{remark}[theorem]{Remark}
\newtheorem{example}[theorem]{Example}
\providecommand{\keywords}[1]
{	
  {\small \textit{Keywords---}{#1}}
}
\newcommand{\COMMENT}[1]{\textcolor{gray}{\texttt{/*} #1 \texttt{*/}}}
\begin{document}
\maketitle

\begin{abstract}
The cyber risk insurance market is at a nascent stage of its development, even as the magnitude of cyber losses is significant and the rate of cyber loss events is increasing.
Existing cyber risk insurance products as well as academic studies have been focusing on classifying cyber loss events and developing models of these events, but little attention has been paid to proposing insurance risk transfer strategies that incentivise mitigation of cyber loss through adjusting the premium of the risk transfer product.
To address this important gap, we develop a Bonus-Malus model for cyber risk insurance.
Specifically, we propose a mathematical model of cyber risk insurance and cybersecurity provisioning supported with an efficient numerical algorithm based on dynamic programming.
Through a numerical experiment, we demonstrate how a properly designed cyber risk insurance contract with a Bonus-Malus system can resolve the issue of moral hazard and benefit the insurer.

\keywords{Cyber risk insurance, cybersecurity, Bonus-Malus, stochastic optimal control, dynamic programming}
\end{abstract}

\section{Introduction}

\subsection{The Ever-Increasing Threat of Cyber Crimes}
In recent years, the risk of cyber crimes has emerged as a top global concern. 
This is made evident by the world economic forum's annual global risk report \citep{wef2020}, which regularly places cyber attacks and theft of data in its ``Top 5 global risks in terms of likelihood''.
Over the years, the frequency and severity of cyber attacks have increased significantly globally, and they are projected to continue to increase in the future.
Recently, Cybersecurity Ventures estimated the cost of cyber crimes to rise to 10.5 trillion USD annually by 2025 \citep{morgan2020cybercrimecost}, up from a world economic forum estimate of 3 trillion USD for 2015.  
Furthermore, see a detailed descriptive analysis of cyber crimes in the business sector over the last 15 years in \citep{shevchenko2022nature}.

Cyber crime can be initiated by various actors, including malicious actors within institutions and external adversaries such as rogue nation states, hackers, and cyber terrorists.
These attacks come in a variety of forms, ranging from denial-of-service (DoS) attacks \citep{gupta2017taxonomy}, malware \citep{tailor2017comprehensive}, ransomware \citep{tailor2017comprehensive}, blackmail \citep{rid2012cyber}, extortion \citep{young1996cryptovirology}, and more \citep{craigen2014defining, husak2018survey}.
Such criminal activities are being perpetrated on a massive scale, hitting individuals as well as organisations. A large array of organisations worldwide are targeted by cyber attacks, including government agencies, universities, financial sectors, and private corporations. 
In particular, these attacks can affect important infrastructure units that play a key role in security and safety, such as emergency services and health care.
Such attacks have caused breaches and significant damages to those organisations, and often adversely affect downstream users of the compromised organisations and services.
The damages incurred include losses attributed to outcomes such as business interruption, loss of data, reduced reputation and trust of the organisation, legal liabilities, intellectual property theft, and potential for loss of life. 

The seriousness of cyber attacks has been reflected in the {U.S.} President's executive order on \textit{Strengthening the Cybersecurity of Federal Networks and Critical Infrastructure}, which calls for a cybersecurity framework that can ``support the cybersecurity risk management efforts of the owners and operators of the Nation’s critical infrastructure".
Cyber risk from a financial and insurance perspective has also been developed under international banking and insurance regulations. 
The Basel~III banking Accords cover cyber risk as a key component of Operational Risk capital modelling and adequacy, and the Solvency~II insurance regulations discuss the significance of an emerging cyber insurance threat that affects insurers as well as reinsurers.
For example, see an overview of cyber risk from a financial and insurance perspective in \citep{peters2018understanding} and \citep{malavasi2022cyber}.

\subsection{Challenges in the Current State of Cyber Risk Insurance} 
\label{ssec:challenges}

Although many security solutions have been developed and implemented in order to detect and prevent cyber attacks, achieving a complete security protection is not feasible \citep{lu2018managing}.
To address this problem, there is an increasing demand to develop the market for cyber risk insurance and to understand the structuring of insurance products that will facilitate risk transfer strategies in the context of cyber risk; see discussions in \citep{peters2018understanding, peters2017statistical, marotta2017cyber, bohme2010modeling} and the references therein. 

As one can see from surveys such as \citep{marotta2017cyber}, \citep{shetty2010competitive}, \citep{eling2020cyber}, and \citep{cremer2022cyber}, the scope of such a marketplace is still very much in its infancy, despite the fact that financial institutions rank cyber losses in their top three loss events systematically when reporting Operational Risk loss events under Basel II/III to national regulators. 
The reason that the cyber risk insurance market has yet to emerge with standardised products is largely due to the divergence in opinions as to how best to mitigate and reserve against these loss events. 
Below is a summary of three different perspectives on cyber risk insurance. 
\begin{enumerate}[label=(\roman*)]
\item From a technology perspective, it is common to attempt to mitigate such events in contrast to insurance or capital reserving; see discussions by \citet{bandyopadhyay2009managers}. 
\item From a financial risk perspective, practitioners often opt for Tier I capital reserving and avoid insurance products, as the capital reduction from Operational Risk insurance is capped under Basel regulations with a haircut of 20\%; see discussions by \citet{peters2011impact}. This disincentivises them to purchase insurance products that have excessive premiums. 
\item From the insurance industry's perspective, there is a lack of market standardisation on insurance contract specifications that would avoid excessive premiums to be charged when  bespoke insurance products are designed. 
\end{enumerate}

Moreover, as pointed out by recent studies and surveys by \citet{romannosky2019content, nurse2020data, roland2021multivariate, eling2020cyber}, and \citet{cremer2022cyber}, accurate modelling of cyber risk remains a major challenge largely due to the scarcity of accessible historical data on cyber loss events.
The lack of reliable historical cyber data results in high levels of uncertainty in the models, which ultimately lead to overpriced cyber risk cover \citep{kshetri2018economics}.
\citet{nurse2020data} identified the understanding of data collection and use in cyber risk insurance as a crucial aspect for future research and practice.
\citet{eling2020cyber} also mentioned that a critical challenge in understanding and measuring cyber risk is the accurate quantification of the effectiveness of cyber risk countermeasures, which demands more and higher quality information about cyber incidents to be shared.

\subsection{Our Contributions}

The three aforementioned perspectives are beginning to change and
the emergence of data sharing platforms \citep{eling2020cyber}, the potential introduction of data sharing mandates by relevant regulatory bodies, and advancements of modelling techniques in the academic world all point towards a promising outlook, where corporates, insurers/reinsurers, regulators, and cybersecurity experts/researchers can work more closely to achieve a better quantitative understanding of cyber incidents and cyber losses. 
As discussed in \citep{eiopa2018}, some insurers/reinsurers are beginning to improve  their pricing models for cyber risk by purchasing databases from external providers. 
Hence, we believe that it is a suitable time to revisit the perennial question of how best to set up an insurance marketplace for cyber loss events. 
In this paper, we introduce for the first time a Bonus-Malus framework for cyber risk insurance that provides IT-specific incentive mechanisms for encouraging sound IT governance and technology developments. 
Specifically, we explore a class of Bonus-Malus systems in which an insured enjoys a discount in the cost of risk transfer as a result of their upfront expenses in risk reduction in the form of self-mitigation measures; and like-wise an insurer benefits from encouraging risk reduction in their risk pools to provide competitive pricing of insurance premiums. 
Under this framework, we also demonstrate how to develop decision models under uncertainty.
Building this mutually beneficial mechanism into cyber risk insurance can hopefully attract more interests from potential customers and lead to better understandings of the effects of cybersecurity countermeasures on cyber incidents, which can help bridge the gap in product standardisation.

To illustrate this framework and the associated decision problems, let us consider an organisation which provides a service that is exposed to the threat of distributed denial-of-service (DDoS) attacks.
Such an organisation may opt to use network traffic filtering as well as a content distribution network, and may use multiple servers to balance the network load. 
As such, there are various countermeasures for mitigating the threats of DDoS attacks and ensuring the availability of the service, but each of such self-mitigation measures adds to the upfront costs of risk reduction. 
Therefore, the organisation needs to determine the quantum of its security (i.e., risk reduction) budget and distribute this across these self-mitigation measures. We argue that this budget for risk reduction needs to and can be determined in tandem with risk transfer decisions, i.e., the purchase of cyber risk insurance.
Moreover, a cyber risk insurer can incorporate incentive mechanisms in their insurance policies to encourage such risk reduction provision through offsetting the expenses of risk reduction by a discount in the pricing of the insurance premium.
This highlights the need for a comprehensive framework in which losses incurred by cyber threats can be realistically modelled and the rational decisions of organisations in the face of cyber threats can be quantitatively analysed.

Our main contributions are as follows: 
\begin{enumerate}
    \item We introduce a Bonus-Malus system to cyber risk insurance as a mechanism to provide incentive for the insured to adopt self-mitigation measures against cyber risk. 
\item We model cyber losses under the Loss Distribution Approach (LDA) framework where we include a mathematical description of cyber mitigation measures. 
Subsequently, decisions of the insured that pertain to cyber mitigation measures and cyber risk insurance contracts are jointly modelled within an optimal cybersecurity provisioning process under the stochastic optimal control framework.
\item We develop an efficient algorithm based on dynamic programming to accurately solve the stochastic optimal control problem, under the assumption that the loss severity follows a truncated version of the g-and-h distribution. We also formally prove the correctness of the proposed algorithm. 
\item We demonstrate through a numerical experiment that a properly designed cyber risk insurance contract with a Bonus-Malus system can resolve the issue of moral hazard, and can provide benefits for the insurer.
\end{enumerate}

Specifically, in our LDA model, the annual frequency and severity of cyber incidents are assumed to be independently distributed (this assumption is discussed and justified in Section~\ref{ssec:lossmodel}).
Moreover, rather than developing an explicit pricing model to determine the premium in the cyber risk insurance contract, our optimal cybersecurity provisioning process focuses on the insured's perspective, and it allows the insurers to evaluate and compare contract design options and to infer an optimal contract design from the insured's optimal responses.
This is discussed in Section~\ref{ssec:pricing}.

The rest of the paper is organised as follows. 
Section~\ref{sec:relatedwork} discusses related studies in the literature.
In Section~\ref{sec:model}, we introduce the mathematical model of cyber losses and cyber risk insurance with a Bonus-Malus system. In Section~\ref{sec:stocoptctr}, we present the optimal cybersecurity provisioning process and the dynamic programming algorithm. In Section~\ref{sec:g-and-h}, we introduce the g-and-h distribution and use it as the model for loss severity. We present results from the numerical experiment in Section~\ref{sec:exp}. Finally, Section~\ref{sec:conclusion} concludes the paper. 

\section{Related Work}
\label{sec:relatedwork}
Recently, many studies analysed cyber risk insurance from a technology perspective. 
Cybersecurity frameworks involving cyber risk insurance have been developed for specific IT systems, 
including computer networks \citep{fahrenwaldt2018pricing,xu2019cybersecurity}, 
heterogeneous wireless network \citep{lu2018cyber}, wireless cellular network \citep{lu2018managing}, plug-in electric vehicles \citep{hoang2017charging}, cloud computing \citep{chase2017scalable}, and fog computing \citep{feng2018evolving}. 

Some studies considered the interplay between self-mitigation measures (i.e., risk reduction) and cyber risk insurance, e.g., \citep{pal2010analyzing, pal2014will, pal2017security, khalili2018designing, dou2020insurance}. 
These studies investigated two important challenges in cyber risk insurance: risk interdependence and moral hazard. They found that in order to incentivise the insured to invest in self-mitigation measures, some form of contract discrimination, i.e., adjusting the insurance premium based on the insured's security investment, is necessary. 
\citet{yang2014security, schwartz2014cyber, zhang2017bi} investigated these challenges in a networked environment, where cyber attacks can spread between neighbouring nodes, further complicating these challenges.

There are also studies which took the insured's perspective and analysed the security provisioning process using dynamic models. 
\citet{chase2017scalable} developed a framework based on stochastic optimisation to jointly provision cyber risk insurance and cloud-based security services across multiple time periods in cloud computing applications. 
\citet{zhang2018optimal} modelled the decisions on self-protections of the insured by a Markov decision process and investigated the problem of insurance contract design.

A critical drawback of many of the existing studies is that they neglected the highly uncertain nature of losses incurred by cyber incidents. These studies relied on over-simplified assumptions, e.g., by modelling cyber loss as: a fixed amount \citep{pal2010analyzing,pal2014will,yang2014security,hoang2017charging,feng2018evolving,dou2020insurance}, random with finite support \citep{chase2017scalable,zhang2018optimal,lu2018managing}, or random with a simple parametric distribution \citep{zhang2017bi,khalili2018designing}. These assumptions limit the practicality of these studies, and their results remain conceptual and non-applicable to realistic insurance loss modelling under a classical Loss Distribution Approach (LDA) framework \citep{moscadelli2004modelling, peters2006bayesian, maillart2010heavy, shevchenko2013loss, eling2019actual, zeller2022comprehensive, dacorogna2023building, malavasi2022cyber}.
LDA is one of the most common modelling methods in the Advanced Measurement Approach (AMA) under the Basel II Accords. 
Under the LDA framework, the probability distributions of loss severity (i.e., the impact of a single loss event) and annual loss frequency are modelled and estimated separately. The aggregate annual loss is thus modelled by a compound distribution. 
For detailed discussions about LDA and a comparison with the Internal Measurement Approach, see \citep{frachot2001loss} and the references therein.
In addition, see \citep{dutta2006tale} for an evaluation of specific distributional assumptions in LDA and their impacts on the estimation of Operational Risk capital. 
In our study about the design of cyber risk insurance contracts, we have built LDA into our cyber loss model and we have also incorporated a quantitative model for the effects of self-mitigation measures on cyber losses (see Section~\ref{ssec:lossmodel}).

Moreover, many of the aforementioned studies do not take into account the interplay between the upfront costs of risk prevention, the consequent reduced risks, and the possibility to exploit this interplay to design practical cyber risk insurance products.
A review of cyber insurance product prospectus by major insurers, such as AIG's CyberEgde\footnote{\url{https://www.aig.com/business/insurance/cyber-insurance/}, accessed on 2020-12-10}, Allianz's Cyber Protect\footnote{\url{https://www.agcs.allianz.com/solutions/financial-lines-insurance/cyber-insurance.html}, accessed on 2020-12-10}, and Chubb's Cyber Enterprise Risk Management (Cyber ERM)\footnote{\url{https://www.chubb.com/us-en/business-insurance/cyber-enterprise-risk-management-cyber-erm.html}, \linebreak accessed on 2020-12-10} also indicates that the insurance products in the market have yet to explicitly factor in the benefits of upfront risk reduction, or to offset those costs against that of risk transfer.
For the first time, we introduce a Bonus-Malus system which is frequently used in vehicle insurance products to address this gap in cyber risk insurance product design. 
In vehicle insurance, Bonus-Malus systems are experience rating systems in which an insured who had one or more accidents is penalised by premium surcharges or \textit{maluses} and an insured who had a claim-free year is rewarded with premium discounts or \textit{bonuses} \citep{lemaire1995bonus, neuhaus1988bonus}; see, e.g., \citep{baione2002development, ragulina2017bonus, gomez2018multivariate, tzougas2018bonus} for discussions about the design and analysis of Bonus-Malus systems.
A key characteristic of Bonus-Malus systems is the bonus hunger mechanism \citep{holtan2001optimal, charpentier2017optimal, tzougas2018bonus}, i.e., under a Bonus-Malus system, an insured is willing to carry small losses themself in order to avoid premium surcharges in the future.
Compared to classical Bonus-Malus systems in the literature in which transitions of Bonus-Malus levels are solely determined by the presence/absence of a claim or claim frequencies (see, e.g., \citep{boucher2023bonus, gomezdeniz2016bivariate}), the Bonus-Malus system that we consider in our model incorporates flexible transition rules which can distinguish small claims from large claims.
This is a design recently investigated by, e.g., \citet{moumeesri2022bonus}.
Moreover, we also allow each Bonus-Malus level to have distinct premium, deductible, and maximum coverage (see Section~\ref{ssec:bonusmalus}). 
Furthermore, rather than using the traditional Bayesian methods \citep{moumeesri2022bonus} to determine Bonus-Malus premiums, we integrate our Bonus-Malus system into an optimal cybersecurity provisioning process (see Section~\ref{sec:stocoptctr}), in which the rational behaviours of the insured including decisions that pertain to cybersecurity mitigation measures and cyber risk insurance can be jointly quantitatively analysed. 
In particular, our optimal cybersecurity provisioning process captures the bonus hunger mechanism by allowing the insured to decide whether to make a claim at the end of each policy year.
This approach allows the insurer to tailor the transition rules in the Bonus-Malus contract and fine-tune the premiums to achieve their objectives, e.g., to maximum retention of customers.

\section{Cyber Risk Insurance Policy and Bonus-Malus System}
\label{sec:model}

Let us first present an overview of our cyber risk insurance model.
To begin, let us specify the frequency and severity model under the Loss Distribution Approach (LDA) that defines the financial loss process resulted from cyber loss events.
We consider $T\in\N$ consecutive years, and we assume that throughout each year $t$, the insured may suffer a random number ($N_t \in \N$) of cyber loss events arising from cyber attacks. Their loss amounts are denoted by $X^{(t)}_1,\ldots,X^{(t)}_{N_t}$. The aggregate annual cyber loss in year $t$ is therefore $\sum_{k=1}^{N_t}X^{(t)}_{k}$. 
The insured has several choices to attempt to mitigate these cyber loss events and reduce the risk, including enhancing the security and resilience of their IT infrastructure and reserving Tier~I capital to cover the incurred losses.
%In practice, a range of these options are available to the insured. 
In regards to the internal IT infrastructure, it will be assumed that the insured has the option to adopt a self-mitigation measure that can reduce the severity of cyber loss events up to a fixed amount of loss. 
In addition, the insured can choose to purchase a cyber risk insurance policy which gives the insured the right to claim the aggregate cyber loss incurred, up to a maximum cap imposed by the insurance contract, in an agreed interval of time (typically annually), minus a deductible. 
Please refer to Table~\ref{tab:model-notations} for notations used in the cyber risk insurance model.

\begin{table}
\begin{center}
\caption{Notations in the cyber risk insurance model}
\label{tab:model-notations}
\vspace{0.5em}
\begin{tabular}{ll}
$T$ & number of policy years \\
$N_t$ & number of cyber loss events in year $t$ \\
$X^{(t)}_1,\ldots,X^{(t)}_{N_t}$ & loss amounts of the cyber loss events in year $t$  \\
$\CW$ & space representing annual loss frequency and severities \\ 
$(\Omega,\CF_T,\PROB,(\CF_t)_{t=0:T})$ & filtered probability space \\ 
$W_t$ & loss frequency and severities in year $t$ \\
$\psi_{N_t}(\cdot)$ & probability generating function of the loss frequency in year $t$ \\ 
$F_{X^{(t)}}(\cdot)$ & loss severity distribution function in year $t$ \\ 
$\CD$ & set representing self-mitigation measures \\
$\beta(d)$ & annual investment required by the self-mitigation measure $d\in\CD$ \\
$\gamma(d)$ & loss reduction effect of the self-mitigation measure $d\in\CD$ \\
$L(d,w)$ & aggregate annual cyber loss with the self-mitigation measure $d\in\CD$ \\
$\delta_{\Ti\Tn}(t)$ & initial sign-on fee of cyber risk insurance in year $t$ \\
$\delta_{\To\Tu\Tt}(t)$ & withdrawal penalty of cyber risk insurance in year $t$ \\
$\delta_{\Tr\Te}$ & re-activation penalty of cyber risk insurance \\
$\CB$ & set representing Bonus-Malus levels \\
$\CI$ & set representing states of the cyber risk insurance contract \\
$\CB\CM(\cdot,\,\cdot)$ & transition rules of the Bonus-Malus system \\
$\CB\CM_0(\cdot,\,\cdot)$ & transition rules of the Bonus-Malus system for inactive contract \\
$p^{\CB\CM}(b,t)$ & insurance premium in Bonus-Malus level $b$ in year $t$ \\
$l^{\CB\CM}_{\Td\Tt\Tb}(b,t)$ & insurance deductible in Bonus-Malus level $b$ in year $t$ \\
$l^{\CB\CM}_{\max}(b,t)$ & maximum insurance compensation in Bonus-Malus level $b$ in year $t$ \\
$\lambda^{\CB\CM}(b,t,l)$ & insurance compensation in Bonus-Malus level $b$ in year $t$ with loss $l$
\end{tabular}
\end{center}
\end{table}

\subsection{Loss Distribution Approach with Cyber Mitigation}
\label{ssec:lossmodel}

Let us define $\CW:=\bigcup_{n\in\Z_+}\{n\}\times\R^n_+$ to be the space representing all possible combinations of realisations of the number of cyber loss events per year (frequency) and individual loss amounts per event (severities).
Let $\FB(\CW):=\sigma\big(\bigcup_{n\in\Z_+}\{(n,B):B\in\FB(\R^n_+)\}\big)$ be a $\sigma$-algebra on $\CW$, where $\FB(\R^n_+)$ denotes the Borel subsets of $\R^n_+$. 
Let us consider the space $\Omega:=(\CW)^T=\underbrace{\CW\times\cdots\times\CW}_{T \text{ times}}$. For each $\omega=(w_1,\ldots,w_T)=\big(\big(n_1,x^{(1)}_1,\ldots,x^{(1)}_{n_1}\big),\ldots,$ $\big(n_T,x^{(T)}_1,\ldots,x^{(T)}_{n_T}\big)\big)\in\Omega$, we define $W_t(\omega):=w_t$ and $N_t(\omega):=n_t$ for $t=1,\ldots,T$. 
Let $\PROB_t$ be a probability measure on $(\CW,\FB(\CW))$, where the subscript $t$ indicates that it is a probability measure for the cyber loss events occurring in year $t$.
Let $\PROB:=\PROB_1\otimes\PROB_2\otimes\cdots\otimes\PROB_T$, and let $(\CF_t)_{t=0:T}$ be a filtration on $\Omega$, defined by 
$\CF_0:=\{\emptyset,\Omega\}$, $\CF_t:=\sigma((W_s)_{s=1:t})$. Then, $(\Omega,\CF_{T},\PROB,(\CF_t)_{t=0:T})$ is a filtered probability space. Under these definitions, $W_1,\ldots,W_T$ are independently distributed random variables.
Let $\psi_{N_t}(s):=\EXP[s^{N_t}]$ denote the probability generating function (pgf) of the loss frequency distribution in year $t$.
We assume that for $t=1,\ldots,T$, 
\begin{align}
\begin{split}
&\phantom{=}\;\;\PROB\Big[\Big\{\omega=\big(\big(n_1,x^{(1)}_1,\ldots,x^{(1)}_{n_1}\big),\ldots,\big(n_T,x^{(T)}_1,\ldots,x^{(T)}_{n_T}\big)\big):n_t=n,\;x^{(t)}_k\le z_k,\forall 1\le k\le n\Big\}\Big]\\
&=\PROB[N_t=n]\prod_{k=1}^n F_{X^{(t)}}(z_k),
\end{split}
\end{align}
where $F_{X^{(t)}}(\cdot)$ is the severity distribution function in year $t$. This implies that given the loss frequency in a year, the individual loss amounts in that year are independently and identically distributed (i.i.d.).
Even though it is well-known that dependence among cyber incidents (both intertemporal dependence and dependence across organisations) are present in practice \citep{biener2015insurability}, we neglect such dependence and assume independence in our model for simplicity since we are focusing on a single organisation. 
This is also a commonly adopted assumption in statistical analyses of cyber loss data; see, e.g., \citep{farkas2021cyber, malavasi2022cyber}.
We would like to remark that there are studies that take dependence into account in their models. 
For example, \citet{zangerle2023modelling} estimated the intertemporal dependence of cyber loss events. However, their models are limited to a small number of parameters due to scarcity of data.
\citet{peters2023cyber} analysed the dependence of cyber loss events across multiple companies and industry sectors, and highlighted the issue of parameter uncertainty in estimating the dependence structure and how it can translate into uncertainty about insurance premiums.
Moreover, we assume that the severity distributions all have finite expectation, i.e., $\int_{\R_+}|x|\DIFFM{F_{X^{(t)}}}{\DIFF x}<\infty$ for $t=1,\ldots,T$. 
For convenience, we write $W_t=\big(N_t,X^{(t)}_1,\ldots,X^{(t)}_{N_t}\big)$. 
Note that we allow the frequency and severity distributions to vary between years in order to capture the possible non-stationarity in cyber losses; for example, \citet{malavasi2022cyber} identified an increasing trend as well as an increase in variability in the number of cyber loss events between 2008 and 2014.
We use $X^{(t)}$ to denote a random variable that has the distribution function $F_{X^{(t)}}$.
We will discuss the properties of the severity distribution $F_{X^{(t)}}$ in Section~\ref{sec:g-and-h}.

\begin{remark}
Some probabilistic models of the severity of cyber incidents may have infinite expectation. For example, \citet{maillart2010heavy} analysed a dataset about personal identity theft and calibrated a power-law distribution to the number of records stolen in data breach incidents. They found that the calibrated model exhibits infinite expectation.
The analysis of a similar dataset by \citet{farkas2021cyber} using Generalised Pareto regression tree yields infinite conditional expectations for certain values of the covariates.
\citet{malavasi2022cyber} also found using a real dataset that cyber event severity distribution has infinite expectation. 
On the other hand, some studies of real cyber loss datasets concluded that the loss expectation is finite; see, e.g., \citep{dacorogna2023building}.
Moreover, \citet{peters2023cyber} remarked that the estimation of the tail behaviour of cyber loss severity depends heavily on data quality and sample size, and that significant model risk and parameter uncertainty may be present in the estimation, which can lead to insurance contracts being mispriced.
Even though the infiniteness of the loss expectation raises questions about the insurability of cyber risk, we argue that the finite expectation assumption here does not limit the practicality of our approach when we are interested in a single organisation.
From the insurer's perspective, the cyber risk insurance contract only covers a particular class of cyber loss events, and they can thus exclude all other event types from their analyses.
Moreover, cyber insurance contracts usually impose an upper limit on the compensation (see Section~\ref{ssec:cyberinsurance}), which guarantees the finiteness of the expected compensation.
From the insured's (i.e., the organisation's) perspective, there is an implicit upper bound on the monetary losses that can result from cyber incidents;
%%%%%%%%%%%%%%%%%%%%%%%%%%%%%%%%%%%%%%%%%%%%%%%%%%%%%%%%%%%%%%%
for example, the maximum annual monetary loss is bounded above by the valuation of the organisation.
% for example, the maximum monetary loss is limited by the size of the organisation.
%%%%%%%%%%%%%%%%%%%%%%%%%%%%%%%%%%%%%%%%%%%%%%%%%%%%%%%%%%%%%%%%%%%%%%%%%%%%%%%%
\label{rmk:finite-expectation}
\end{remark}

Let us assume that there exists $D\in\N$ different self-mitigation measures, and the insured makes the decision to either adopt one of the self-mitigation measures or to not adopt any self-mitigation measure at the beginning of each year. The self-mitigation measure $d\in\CD:=\{0,1,\ldots,D\}$ requires an annual investment of $\beta(d)\in\R_+$ per year, and reduces the severity of each cyber loss by up to $\gamma(d)\in\R_+$, that is, the severity of a loss in year $t$ will be decreased from $X^{(t)}$ to\footnote{Throughout the paper, we use the following notations: $(x)^+:=\max\{x,0\}$, $x\vee y:=\max\{x,y\}$ and $x\wedge y:=\min\{x,y\}$.} $\left(X^{(t)}-\gamma(d)\right)^+$ with the adoption of the self-mitigation measure $d$. We assume that $\beta(0)=\gamma(0)=0$. 
Thus, if the insured decides to adopt the self-mitigation measure $d$ in a year, then the total loss suffered by the insured that year is given by
\begin{align}
L(d,w):=\sum_{k=1}^{n}\left(x_k-\gamma(d)\right)^+,
\label{eqn:yearlyloss}
\end{align}
when the corresponding loss frequency and severities that year are $w=(n,x_1,\ldots,x_n)\in\CW$.
Note that the decreased severity $\left(X^{(t)}-\gamma(d)\right)^+$ leaves the tail part of the severity distribution unchanged. 
This is a practically relevant assumption due to the right-skewed and heavy-tailed nature of cyber severities, that is, major cyber incidents causing severe losses are rare (see, e.g., \citep{zangerle2023modelling}).
Thus, it is reasonable to assume that the adoption of self-mitigation measures is effective against simple forms of cyber attacks that are more frequently occurring, and has little effect against severe and sophisticated attacks initiated by cyber terrorists.

\begin{remark}
One practical factor that may limit the applicability of this model is calibration, that is, given a self-mitigation measure~$d$, how one can reliably determine its effect $\gamma(d)$.
Achieving an accurate calibration of the cyber loss model is challenging in practice \citep{eling2020cyber}, due to the scarcity of publicly available cyber loss datasets (see the discussion in Section~\ref{ssec:challenges}).
Moreover, research studies focusing on the causal links between the IT security investments of an organisation and the cyber losses it suffers is rare \citep{woods2021sok}. 
Such empirical analyses are complicated by the fact that IT security is highly complex as it not only depends on technology but also strongly depends on organisational and human aspects \citep{herath2020organizational}. 
Recently, \citet{skarczinski2022more} analysed data from 493 firms about their most severe cyber incidents in the past 12 months that was sampled from a survey in 2018/19. 
They used the method of partial least square path modelling (PLS-PM) to analyse the causal effects of IT security-specific indicators including IT security budget, password requirements, regular backups, anti-virus software, regular patching, firewall, written information security policies, regular risk assessments/pentests, and failure simulations on the direct costs of cyber incidents. 
This study provides valuable insights about how the effects of cyber risk mitigation measures can be modelled and calibrated in practice. 
Therefore, one can expect that the increasing availability of open datasets about cyber incidents as well as the advancement of causal modelling techniques can ease the challenge of estimating the effects of self-mitigation measures in the future. 
\label{rmk:mitigation-effects}
\end{remark}

\subsection{Cyber Risk Insurance Policy}
\label{ssec:cyberinsurance}
Let us now consider a cyber risk insurance contract that lasts for $T$ years. 
At the beginning of each year, the insured decides whether to activate the contract. In the case that the contract has been activated in a previous year, this corresponds to the insured deciding whether to continue the contract. If the contract is activated, the insured pays a premium to the insurer at the start of the year, in exchange for insurance coverage throughout the year. If the insured decides to withdraw from the contract, they no longer pays the premium and the contract is deactivated so that the insured receives no coverage. 
We further assume that the insured pays the insurer an initial sign-on fee for fixed costs and contract origination the first time a contract is initiated, in addition to the premium, and that this amount varies deterministically over time and will be denoted by $\delta_{\Ti\Tn}(t)\ge 0$ in year $t$. This can be used to incentivise the insured to activate the contract early. 
Furthermore, we also assume that the insured pays the insurer a deterministic and time-dependent penalty, denoted by $\delta_{\To\Tu\Tt}(t)\ge0$, when they withdraws from the contract in year $t$. Once withdrawn, the insured may re-activate the contract in a later year with a fixed penalty $\delta_{\Tr\Te}\ge0$. 

Suppose that the aggregate cyber loss suffered by the insured in a year is $L$. At the end of the year, the insured decides whether to make a claim to the insurer. Once the claim is processed, the insured receives a payment of $(L-l_{\Td\Tt\Tb})^+\wedge l_{\max}$ from the insurer as compensation, that is, the insured covers the loss up to the deductible $l_{\Td\Tt\Tb}\ge 0$, and the insurer covers all of the remaining loss up to the maximum compensation $l_{\max}\ge0$. 

\subsection{Bonus-Malus System}
\label{ssec:bonusmalus}

We now introduce the Bonus-Malus system to cyber risk insurance contracts. Let us assume that there are $B\in\N$ Bonus-Malus levels in the contract, denoted by $\CB:=\{-\underline{B},\ldots,-1,0,1,\ldots,\overline{B}\}$, where $\underline{B}+\overline{B}+1=B$. 
Here, the level $0$ is the initial Bonus-Malus level, and the lower the Bonus-Malus level, the higher the experience rating.
Consequently, a negative Bonus-Malus level grants the insured premium discounts, while a positive Bonus-Malus level penalises the insured by premium surcharges.
At $t=0$, the insured starts in the initial Bonus-Malus level, denoted by $b_0=0$. At the end of the $t$-th year, given that the contract is still active, the insurer determines the Bonus-Malus level of the insured based on their previous level $b_{t-1}$ and the amount of insurance claim $C_t$ that was given out to the insured in the $t$-th year, that is, $b_{t}=\CB\CM(b_{t-1},C_t)$, where $\CB\CM:\CB\times\R_+\to\CB$ denotes the deterministic rules that are transparent to the insured at the signing of the contract. We make the assumption that $\CB\CM(b,C)$ is non-decreasing in $C$ for each $b\in\CB$. 
Even when the insured has withdrawn from the contract, we assume that their Bonus-Malus level is still updated annually. 
Concretely, let us define $\CI:=\{\Tn\To,\To\Tn,\To\Tf\Tf_1,\ldots,\To\Tf\Tf_{T}\}$ as the set of all possible states of the cyber risk insurance contract. In $\CI$, ``$\Tn\To$'' denotes that the contract has not been signed yet, ``$\To\Tn$'' denotes that the contract is active, and ``$\To\Tf\Tf_y$'' denotes that the contract is withdrawn where $y\in\Z_+$ is a counter variable that is updated annually as long as the insured does not re-activate the contract. 
Let $\CB\CM_{0}:\CB\times\CI\to\CB\times\CI$ be a deterministic transition function that represents the update rules after the insured withdraws from the contract. At the end of the $t$-th year, given that the contract is inactive, the insurer determines the Bonus-Malus level $b_t$ and the insurance state $i_t$ of the insured based on their Bonus-Malus level and the insurance state in the previous year, that is, $(b_t,i_t)=\CB\CM_0(b_{t-1},i_{t-1})$. Since no such update is possible before the insured activates the contract, it is required that $\CB\CM_{0}(b,\Tn\To)=(b,\Tn\To)$ for all $b\in\CB$. We will formally model the evolution of $(b_t,i_t)_{t=0:T}$ by a controlled stochastic process in Section~\ref{ssec:provisionprocess}.

With the addition of the Bonus-Malus system to the cyber risk insurance contract, we assume that the premium depends on both time and the current Bonus-Malus level of the insured, and is given by $p^{\CB\CM}(b,t)$, where $p^{\CB\CM}:\CB\times\{1,\ldots,T\}\to\R_+$ is a deterministic function that is increasing in the first argument. The deductible and the maximum compensation are also assumed to be dependent on both time and the Bonus-Malus level, and are given by deterministic functions $l_{\Td\Tt\Tb}^{\CB\CM}:\CB\times\{1,\ldots,T\}\to\R_+$ and $l_{\max}^{\CB\CM}:\CB\times\{1,\ldots,T\}\to\R_+$. We define the function $\lambda^{\CB\CM}:\CB\times\{1,\ldots,T\}\times\R_+\to\R_+$ by
\begin{align}
\lambda^{\CB\CM}(b,t,l):=(l-l_{\Td\Tt\Tb}^{\CB\CM}(b,t))^+\wedge l_{\max}^{\CB\CM}(b,t)
\end{align}
to simplify the notation for the claimable loss from the insurer.  

\begin{remark}
Our loss model and insurance model are particularly suitable for analysing cyber risk insurance, due to the nature of cyber loss events.
Firstly, the compound loss model in LDA is vital for capturing the low frequency and high impact nature of certain cyber loss events \citep{maillart2010heavy, biener2015insurability, eling2017data, eling2015modelling, eling2019actual}. 
In particular, this loss model allows us to use the highly flexible g-and-h distribution for the loss severity, in order to capture the right-skewness and the heavy-tail of cyber losses.
This provides adequate modelling flexibility so that one can tailor this loss model, particularly the skewness and the tail index of the loss distribution, according to specific kinds of cyber threats and different business lines. 
We will discuss the details of the g-and-h distribution in Section~\ref{sec:g-and-h}.
Secondly, as studies such as \citep{oughton2019stochastic, armenia2021dynamic} have discussed, despite that complete mitigation of cyber risk is impractical, prevention of crippling damages from cyber attacks, which often include secondary damages to downstream customers, can be achieved with relatively low cybersecurity expenses. 
For example, countermeasures such as the adoption of two-factor authentication and the regular update or reconfiguration of the software system \citep{pate2018cyber} can often result in significant reduction of the risk of intrusion; see also our discussion about their effects in Remark~\ref{rmk:mitigation-effects}. 
Moreover, the adoption of self-mitigation measures generates positive externalities since it improves the overall security of the cyberspace. 
Due to these factors, there is a strong motive for cyber risk insurers to offer incentive mechanisms in their insurance products to encourage such upfront risk reduction effort and to alleviate the problem of moral hazard.
Overall, the combination of this flexible loss model and the incentive mechanisms built into the Bonus-Malus system is novel in the context of cyber risk insurance. 
On the other hand, the independence assumption on the individual loss amounts in our model means that it does not account for the systemic aspect of cyber risk.
We would also like to remark that the general specifications of our cyber risk insurance model make it also applicable to other idiosyncratic risk types which can be partially prevented by self-mitigation measures. 
Despite that, our analyses and discussions about our model will be carried out in the context of cyber risk, and we will not generalise our model to other risk types in order not to obscure the main objective of this paper.
\label{rmk:cyber-specificity}
\end{remark}

\section{Optimal Cybersecurity Provisioning and Stochastic Optimal Control}
\label{sec:stocoptctr}

\subsection{Cybersecurity Provisioning Process}
\label{ssec:provisionprocess}
Now, having introduced the model for cyber losses and cyber risk insurance, we consider the problem of optimal cybersecurity provisioning from the insured's point of view. 
It is assumed that the cybersecurity provisioning process takes place for $T$ consecutive years (same as the length of the cyber risk insurance contract).
Before the first year, the state of the cyber risk insurance contract is initialised to ``$\Tn\To$'', i.e., $i_0=\Tn\To$, and the Bonus-Malus level is initialised to level~$0$, i.e., $b_0=0$.
Subsequently, each year $t\in\{1,\ldots,T\}$ consists of the three following stages:
\begin{enumerate}
\item \textbf{Provision Stage.} The insured decides: (i) the self-mitigation measure to adopt in this year, denoted by $d_t\in\CD$ and (ii) whether to activate/withdraw/re-activate the cyber risk insurance contract, denoted by $\iota_t\in\{0,1\}$. Then, depending on the decision $\iota_t$, a premium payment $p^{\CB\CM}(b_{t-1},t)$ and/or a sign-on fee $\delta_{\Ti\Tn}(t)$, a withdraw penalty $\delta_{\To\Tu\Tt}(t)$, or a re-activation penalty $\delta_{\Tr\Te}$ will be incurred, as stated in Section~\ref{ssec:cyberinsurance} and Section~\ref{ssec:bonusmalus}.

\item \textbf{Operation Stage.} The random cyber loss events and the corresponding cyber losses suffered by the insured in this year, denoted by $W_t$, is realised in this stage according to the model described in Section~\ref{ssec:lossmodel}. The aggregate cyber loss incurred is given by $L(d_t,W_t)$.

\item \textbf{Claim Stage.} If the insurance contract is active, i.e., $\iota_t=1$, the insured decides whether or not to make a claim, denoted by $j_t\in\{0,1\}$. In the case where a claim is made, i.e., $j_t=1$, the insured receives a compensation from the insurer according to the aggregate cyber loss that year, given by $\lambda^{\CB\CM}(b_{t-1},t,L(d_t,W_t))$. Subsequently, the cyber risk insurance contract state $i_t$ and the Bonus-Malus level $b_t$ are updated based on the decisions $\iota_t$ and $j_t$, as described in Section~\ref{ssec:bonusmalus}. 
Concretely, if $\iota_t=1$, then the insurance contract state will be ``$\To\Tn$'' and the Bonus-Malus level will be updated according to the function $\CB\CM(\cdot,\,\cdot)$, i.e., $i_t=\To\Tn$,  $b_t=\CB\CM\big(b_{t-1},j_t\lambda^{\CB\CM}(b_{t-1},t,L(d_t,W_t))\big)$.
If $\iota_t=0$, then the insurance contract state and the Bonus-Malus level will be updated according to the function $\CB\CM_0(\cdot,\,\cdot)$, i.e., $(b_t,i_t)=\CB\CM_0(b_{t-1},i_{t-1})$.
\end{enumerate}

\begin{table}
\begin{center}
\caption{Notations in the optimal cybersecurity provisioning process and stochastic optimal control}
\label{tab:dp-notations}
\vspace{0.5em}
\begin{tabular}{ll}
$d_t$ & decision of self-mitigation measure adopted in year $t$ \\
$\iota_t$ & decision to activate/withdraw/re-activate the insurance contract in year $t$ \\
$j_t$ & decision of whether to make a claim in year $t$  \\
$\Pi$ & set containing all admissible decision policies \\ 
$f_t(b,i,d,\iota,j,w)$ & state transition function in year $t$ in stochastic optimal control \\ 
$g_t(b,i,d,\iota,j,w)$ & cost function in year $t$ in stochastic optimal control \\
$(b^\pi_t,i^\pi_i)_{t=0:T}$ & controlled stochastic process representing the Bonus-Malus and insurance states \\ 
$e^{-r}$ & discount factor in stochastic optimal control \\
$V^\pi_t$ & expected discounted future cost at year $t$ in stochastic optimal control \\ 
$\mathbb{V}_0$ & the optimal value of the stochastic optimal control problem \\
$\CV_t(b,i)$ & value function in dynamic programming \\
$\widehat{d}_t(b,i)$ & one-stage optimal decision of self-mitigation measure in year $t$ \\
$\widehat{\iota}_t(b,i)$ & one-stage optimal decision of cyber risk insurance in year $t$ \\
$\widehat{j}_t(b,i,w)$ & one-stage optimal decision of insurance claim in year $t$ \\
$P^\star_t\big[(b,i)\rightarrow(b',i')\big]$ & transition kernel of the optimally controlled process \\
$\overline{P}^\star_t(b,i)$ & marginal state occupancy probability of the optimally controlled process \\
$\zeta^{(m)}_t(b,i,w)$ & a quantity of interest that depends on the state and the losses \\
$\overline{\zeta}^{(m)}_t$ & expected value of a quantity of interest \\
$\overline{Z}_{\zeta^{(m)}}$ & aggregate expected value of a quantity of interest
\end{tabular}
\end{center}
\end{table}

Next, in order to formally define an optimal decision policy, let us introduce the stochastic optimal control formulation of the cybersecurity provisioning process. Please refer to Table~\ref{tab:dp-notations} for notations used in the cybersecurity provisioning process and the stochastic optimal control formulation.
Let
\begin{align}
\begin{split}
\Pi:=\Big\{\pi=(d_t,\iota_t,&j_t)_{t=1:T}:d_t:\Omega\to\CD,\;\iota_t:\Omega\to\{0,1\} \text{ are }\CF_{t-1}\text{-measurable},\\
&j_t:\Omega\to\{0,1\} \text{ is }\CF_t\text{-measurable},\;\{\iota_t=0,j_t=1\}=\emptyset,\text{ for }t=1,\ldots,T\Big\}
\end{split}
\end{align}
denote the set of all admissible decision policies. The conditions in the above definition are explained as follows.
\begin{itemize}
\item The decisions $d_t$ and $\iota_t$ are made before observing $W_t$, hence may depend on all available information up to year $t-1$.
\item The decision $j_t$ is made after observing $W_t$, hence may depend on all available information up to year $t$.
\item The condition $\{\iota_t=0,j_t=1\}=\emptyset$ requires that the insured may only make a claim (i.e., $j_t=1$) when the insurance contract is activated (i.e., $\iota_t=1$) in year $t$. 
\end{itemize}
For $t=1,\ldots,T$, let $f_t:\CB\times\CI\times\CD\times\{0,1\}\times\{0,1\}\times\CW\to\CB\times\CI$ be the state transition function for each year $t$, given by
\begin{align}
\begin{split}
f_t(b,i,d,\iota,j,w):=\begin{cases}
\big(\CB\CM(b,j\lambda^{\CB\CM}(b,t,L(d,w))),\To\Tn\big) &\text{if }\iota=1,\\
\CB\CM_0(b,i) &\text{if }\iota=0,
\end{cases}
\end{split}
\label{eqn:socstate}
\end{align}
that is, $f_t(b,i,d,\iota,j,w)$ returns the Bonus-Malus level and the insurance state in year $t$ given that the Bonus-Malus level and the insurance state in year $t-1$ are $b$ and $i$, the decisions in year $t$ are $d$, $\iota$, $j$, and the cyber loss events in year $t$ are $w$.
For $t=1,\ldots,T$, let $g_t:\CB\times\CI\times\CD\times\{0,1\}\times\{0,1\}\times\CW\to\R_+$ be the cost function for each year $t$, given by 
\begin{align}
\begin{split}
g_t(b,i,d,\iota,j,w)&:=\beta(d)+\iota p^{\CB\CM}(b,t)+\delta_{\Ti\Tn}(t)\INDI_{\{i=\Tn\To,\iota=1\}}+\delta_{\To\Tu\Tt}(t)\INDI_{\{i=\To\Tn,\iota=0\}}\\
&\hspace{1.5cm}+\delta_{\Tr\Te}\INDI_{\{i\ne\To\Tn,i\ne\Tn\To,\iota=1\}}+L(d,w)-\iota j\lambda^{\CB\CM}(b,t,L(d,w)),
\end{split}
\label{eqn:soccost}
\end{align}
where $\beta(d)$ is the investment of adopting the self-mitigation measure $d$, $\iota p^{\CB\CM}(b)$ corresponds to the cyber risk insurance premium, $\delta_{\Ti\Tn}(t)\INDI_{\{i=\Tn\To,\iota=1\}}+\delta_{\To\Tu\Tt}(t)\INDI_{\{i=\To\Tn,\iota=0\}}+\delta_{\Tr\Te}\INDI_{\{i\ne\To\Tn,i\ne\Tn\To,\iota=1\}}$ corresponds to the sign-on/withdrawal/re-activation costs, $L(d,w)$ is the aggregate cyber loss, and $\iota j\lambda^{\CB\CM}(b,t,L(d,w))$ corresponds to the compensation from the insurer. 

Subsequently, for any decision policy $\pi=(d_t,\iota_t,j_t)_{t=1:T}\in\Pi$, let us define the $(\CF_t)_{t=0:T}$-adapted controlled stochastic process $\big(b^{\pi}_t,i^{\pi}_t\big)_{t=0:T}$ as follows:
\begin{align}
\begin{split}
(b^\pi_0,i^\pi_0)&:=(0,\Tn\To),\\
(b^\pi_t,i^\pi_t)&:=f_t(b^\pi_{t-1},i^\pi_{t-1},d_t,\iota_t,j_t,W_t)\quad\text{for }t=1,\ldots,T.
\end{split}
\label{eqn:markovprocess}
\end{align}
Then, $g_t(b^\pi_{t-1},i^\pi_{t-1},d_t,\iota_t,j_t,W_t)$ corresponds to the cybersecurity cost in year $t$. For $\pi\in\Pi$, and $t=1,\ldots,T$, define $V^{\pi}_t$ by
\begin{align}
V^\pi_t:=\EXP\Big[\textstyle\sum_{s=t+1}^Te^{-(s-t)r}g_s(b^\pi_{s-1},i^\pi_{s-1},d_s,\iota_s,j_s,W_s)\Big|\CF_{t}\Big],
\end{align}
where $0<e^{-r}\le 1$ is the discount factor and $V^\pi_T:=0$,
that is, $V^\pi_t$ is the expected value of the discounted total future cybersecurity cost viewed at the end of year $t$.

We assume that in the optimal cybersecurity provisioning process, the objective of the insured is to minimise the expected value of the discounted total cybersecurity cost, i.e., $V^\pi_0$, over all admissible decision policies $\pi\in\Pi$. This is formulated as the following finite horizon stochastic optimal control problem:
\begin{align}
\BBV_0:=\inf_{\pi\in\Pi}V^\pi_0=\inf_{\pi\in\Pi}\EXP\Big[\textstyle\sum_{t=1}^Te^{-tr}g_t(b^\pi_{t-1},i^\pi_{t-1},d_t,\iota_t,j_t,W_t)\Big]. 
\label{eqn:stocoptctr}
\end{align}

\subsection{Dynamic Programming Algorithm}
\label{ssec:dpalgo}
The stochastic optimal control problem in (\ref{eqn:stocoptctr}) can be solved efficiently via the dynamic programming algorithm. 
The dynamic programming algorithm iteratively solves one-stage optimisation problems while going backward in time.
In the following, let us denote the optimal values of these one-stage optimisation problems by $(\CV_t)_{t=0:T}$, and denote their corresponding optimisers by $(\widehat{d}_t,\widehat{\iota}_t,\widehat{j}_t)_{t=1:T}$.
For $t=T,T-1,\ldots,0$, let $\CV_{t}:\CB\times\CI\to\R_+$ be recursively defined as follows: for every $b\in\CB,i\in\CI$, let
\begin{align}
\begin{split}
\CV_T(b,i)&:=0,\\
\CV_{t-1}(b,i)&:=e^{-r}\min_{d\in\CD,\,\iota\in\{0,1\}}\bigg\{\EXP\bigg[\min_{j\in\{0,1\}}\Big\{g_{t}(b,i,d,\iota,j,W_t)+\CV_{t}\big(f_{t}(b,i,d,\iota,j,W_t)\big)\Big\}\bigg]\bigg\}\\
&\hspace{280pt}\text{for }t=T,T-1,\ldots,1.
\end{split}
\label{eqn:dpvalue}
\end{align}
Moreover, for $t=1,\ldots,T$, let $\widehat{d}_t:\CB\times\CI\to\CD$, $\widehat{\iota}_t:\CB\times\CI\to\{0,1\}$, and $\widehat{j}_t:\CB\times\CI\times\CW\to\{0,1\}$ be defined as follows: for every $b\in\CB,i\in\CI$, let
\begin{align}
\big(\widehat{d}_t(b,i),\widehat{\iota}_t(b,i)\big)&\in \argmin_{d\in\CD,\,\iota\in\{0,1\}}\bigg\{\EXP\bigg[\min_{j\in\{0,1\}}\Big\{g_{t}(b,i,d,\iota,j,W_t)+\CV_{t}\big(f_{t}(b,i,d,\iota,j,W_t)\big)\Big\}\bigg]\bigg\},\label{eqn:dpconcept-d-iota}\\
\widehat{j}_t(b,i,w)&:=\begin{cases}
1 & \text{if }\widehat{\iota}_t(b,i)=1,\;\Big[g_{t}(b,i,\widehat{d}_t(b,i),1,1,w)+\CV_{t}\big(f_{t}(b,i,\widehat{d}_t(b,i),1,1,w)\big)\Big] \\
&\hspace{2.5cm}< \Big[g_{t}(b,i,\widehat{d}_t(b,i),1,0,w)+\CV_{t}\big(f_{t}(b,i,\widehat{d}_t(b,i),1,0,w)\big)\Big],\\
 0 & \text{otherwise}.
\end{cases}
\label{eqn:dpconcept-j}
\end{align}
Thus, for $t=0,\ldots,T$, $\CV_t(b,i)$ corresponds to the optimal total discounted future cybersecurity cost when viewing the optimal cybersecurity provisioning process at the end of year $t$ when the Bonus-Malus level and the insurance state are $b$ and $i$. 
Observe from (\ref{eqn:dpconcept-d-iota}) and (\ref{eqn:dpconcept-j}) that $\big(\widehat{d}_t(b,i),\widehat{\iota}_t(b,i)\big)$ solves the minimisation outside the expectation in (\ref{eqn:dpvalue}), while $\widehat{j}_t(b,i,W_t)$ solves the minimisation inside the expectation in (\ref{eqn:dpvalue}) with $(d,i)\leftarrow\big(\widehat{d}_t(b,i),\widehat{\iota}_t(b,i)\big)$.
Since the minimisation over $j\in\{0,1\}$ in (\ref{eqn:dpvalue}) is inside the expectation, the decision $j$ is made after observing the realisation of the cyber loss events $W_t$, and thus the optimiser $\widehat{j}_t$ is allowed to depend on $W_t$. On the other hand, since the minimisation over $d\in\CD$, $\iota\in\{0,1\}$ is outside the expectation, these decisions need to be made before observing $W_t$, and thus $\widehat{d}_t$ and $\widehat{\iota}_t$ cannot depend on $W_t$.
In the first iteration of the dynamic programming algorithm, one computes $\big(\widehat{d}_T(b,i),\widehat{\iota}_T(b,i),\widehat{j}_T(b,i,w)\big)$ from (\ref{eqn:dpconcept-d-iota}) and (\ref{eqn:dpconcept-j}) for every $(b,i,w)\in\CB\times\CI\times\CW$. Moreover, one computes the values of $(\CV_{T-1}(b,i))_{(b,i)\in\CB\times\CI}$ from (\ref{eqn:dpvalue}).
In subsequent iterations of the dynamic programming algorithm, one iterates through $t=T-1,T-2,\ldots,1$, where, in each iteration, one considers the one-stage optimisation problem in year $t$ given the values $(\CV_t(b,i))_{(b,i)\in\CB\times\CI}$.
Specifically, one computes $\big(\widehat{d}_t(b,i),\widehat{\iota}_t(b,i),\widehat{j}_t(b,i,w)\big)$ from (\ref{eqn:dpconcept-d-iota}) and (\ref{eqn:dpconcept-j}) for every $(b,i,w)\in\CB\times\CI\times\CW$, and computes the values of $(\CV_{t-1}(b,i))_{(b,i)\in\CB\times\CI}$ from (\ref{eqn:dpvalue}).
The following theorem shows how one can construct an optimal decision policy $\pi^\star$ for the stochastic optimal control problem (\ref{eqn:stocoptctr}) by piecing together the one-stage optimisers $\big(\widehat{d}_t,\widehat{\iota}_t,\widehat{j}_t\big)_{t=1:T}$ obtained via dynamic programming.

%%%%%%%%%%%%%%%%%%%%%%%%%% Theorem: dynamic programming %%%%%%%%%%%%%%%%%%%%%%%%%%%%%%%%%%
\begin{theorem}
Let the functions $(\CV_t)_{t=0:T}$ be defined as in (\ref{eqn:dpvalue}), let the functions $(\widehat{d}_t,\widehat{\iota}_t,\widehat{j}_t)_{t=1:T}$ be defined as in (\ref{eqn:dpconcept-d-iota}) and (\ref{eqn:dpconcept-j}), and let $\pi^\star=(d^\star_t,\iota^\star_t,j^\star_t)_{t=1:T}\in\Pi$ be recursively defined as follows: 
\begin{align}
\begin{split}
&\hspace{-0.75cm}\big(b^{\pi^\star}_{0},i^{\pi^\star}_{0}\big):=(0,\Tn\To),\\
\text{for }t&=1,\ldots,T,\text{ let:}\\
&d^\star_t:=\widehat{d}_t\big(b_{t-1}^{\pi^\star},i_{t-1}^{\pi^\star}\big),\quad\quad \iota^\star_t:=\widehat{\iota}_t\big(b_{t-1}^{\pi^\star},i_{t-1}^{\pi^\star}\big),\quad\quad j^\star_t:=\widehat{j}_t\big(b_{t-1}^{\pi^\star},i_{t-1}^{\pi^\star},W_t\big).
\end{split}
\label{eqn:dpconcept-opt-process}
\end{align}
Then, the following holds:
\begin{align}
\CV_0(0,\Tn\To)=V_0^{\pi^\star}=\BBV_0.
\end{align}
\label{thm:dp}
\end{theorem}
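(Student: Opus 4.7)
The plan is to establish the result by backward induction on the time index $t$, proving the two inequalities $\CV_0(0,\Tn\To)\le\BBV_0$ and $\CV_0(0,\Tn\To)\ge V^{\pi^\star}_0$ separately; combined with $V^{\pi^\star}_0\ge\BBV_0$, this forces the chain of equalities claimed.

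First I would prove the inductive claim: for every admissible $\pi\in\Pi$, $\CV_t(b^\pi_t,i^\pi_t)\le V^\pi_t$ almost surely for $t=T,T-1,\ldots,0$. The base case $t=T$ is trivial since both sides vanish. For the inductive step, peel off the year-$t$ term using the tower property:
\begin{align*}
V^\pi_{t-1}=e^{-r}\EXP\big[g_t(b^\pi_{t-1},i^\pi_{t-1},d_t,\iota_t,j_t,W_t)+V^\pi_t\,\big|\,\CF_{t-1}\big]\ge e^{-r}\EXP\big[g_t(\cdots)+\CV_t\big(f_t(\cdots)\big)\,\big|\,\CF_{t-1}\big],
\end{align*}
where the inequality uses the inductive hypothesis at $t$ together with the update rule in (\ref{eqn:markovprocess}). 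The next step is to exploit that $(b^\pi_{t-1},i^\pi_{t-1},d_t,\iota_t)$ are $\CF_{t-1}$-measurable while $W_t$ is independent of $\CF_{t-1}$, so the conditional expectation becomes an integral over the law of $W$ with a $\CF_{t-1}$-measurable parameter $(b,i,d,\iota)$ and a random function $\tilde j(\cdot):\CW\to\{0,1\}$ representing the conditional law of $j_t$ given $\CF_{t-1}$. Pointwise in $w$, one has $g_t(b,i,d,\iota,\tilde j(w),w)+\CV_t(f_t(b,i,d,\iota,\tilde j(w),w))\ge\min_{j\in\{0,1\}}\{g_t(b,i,d,\iota,j,w)+\CV_t(f_t(b,i,d,\iota,j,w))\}$; integrating over $w$ and then bounding by the infimum over $(d,\iota)\in\CD\times\{0,1\}$ yields exactly $e^r\CV_{t-1}(b,i)$ by the definition (\ref{eqn:dpvalue}). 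Evaluating at $(b,i)=(b^\pi_{t-1},i^\pi_{t-1})$ closes the induction. Taking $t=0$ gives $\CV_0(0,\Tn\To)\le V^\pi_0$ for all $\pi$, hence $\CV_0(0,\Tn\To)\le\BBV_0$.

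Next I would show $\CV_t(b^{\pi^\star}_t,i^{\pi^\star}_t)=V^{\pi^\star}_t$ almost surely by the same backward induction, with every inequality upgraded to equality. This rests on the fact that the selectors $\widehat d_t,\widehat\iota_t,\widehat j_t$ defined in (\ref{eqn:dpconcept-d-iota})--(\ref{eqn:dpconcept-j}) attain the respective minima in (\ref{eqn:dpvalue}) by construction. Before invoking this, I would verify that $\pi^\star\in\Pi$: finiteness of $\CB,\CI,\CD$ ensures measurable selectors exist; $d^\star_t,\iota^\star_t$ are $\CF_{t-1}$-measurable because they are functions of $(b^{\pi^\star}_{t-1},i^{\pi^\star}_{t-1})$ which is $\CF_{t-1}$-measurable by the recursion (\ref{eqn:dpconcept-opt-process}); $j^\star_t$ is $\CF_t$-measurable since it additionally depends on $W_t$; and the constraint $\{\iota^\star_t=0,j^\star_t=1\}=\emptyset$ is built into (\ref{eqn:dpconcept-j}). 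Setting $t=0$ then yields $\CV_0(0,\Tn\To)=V^{\pi^\star}_0\ge\BBV_0$, which combined with the first inequality delivers the claim.

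The main technical obstacle is the asymmetric information structure between $(d_t,\iota_t)$ and $j_t$: the outer minimization in (\ref{eqn:dpvalue}) sits outside the expectation while the inner minimization sits inside. Handling this cleanly requires a careful regular-conditional-probability argument to justify replacing the $\CF_{t-1}$-conditional expectation of a function of $(d_t,\iota_t,j_t,W_t)$ by an unconditional expectation over an independent copy of $W$ with $(d_t,\iota_t)$ frozen at their $\CF_{t-1}$-values and $j_t$ expressed as a measurable function of that $W$. Once this representation is in place, the pointwise-in-$w$ domination by the inner minimum and the subsequent bound by the outer infimum are routine, and the verification that $\pi^\star$ achieves every bound with equality follows directly from (\ref{eqn:dpconcept-d-iota})--(\ref{eqn:dpconcept-j}).
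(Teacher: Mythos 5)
Your proposal is correct and follows essentially the same verification-theorem argument as the paper: backward induction via the dynamic programming principle, using the tower property, the independence of $W_t$ from $\CF_{t-1}$ to freeze the $\CF_{t-1}$-measurable decisions $(b,i,d,\iota)$ and view $j_t$ as a measurable function of $W_t$, pointwise domination by the inner minimum over $j$, and then the outer minimum over $(d,\iota)$. The only difference is organisational: the paper packages both halves into a single induction on $\CV_t\big(b_t^{O_t(\pi)},i_t^{O_t(\pi)}\big)=V_t^{O_t(\pi)}\le V_t^\pi$ using concatenated policies $O_t(\pi)$ that follow $\pi$ up to year $t$ and the one-stage optimisers thereafter, whereas you run two separate inductions (the inequality for arbitrary $\pi$ and the equality along $\pi^\star$), whose inductive steps coincide with the paper's displays (\ref{eqn:dpproof-step2}) and (\ref{eqn:dpproof-step1}) respectively.
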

\begin{proof}
See Appendix~\ref{apx:proofs}. 
\end{proof}

Besides an optimal decision policy $\pi^\star$ constructed in Theorem~\ref{thm:dp}, we are often also interested in other quantities related to the optimal cybersecurity provisioning process, such as the transition kernels and the marginal state occupancy probabilities of the process $\big(b^{\pi^\star}_t,i^{\pi^\star}_t\big)_{t=0:T}$ as well as other quantities of interest.
Specifically, given an optimal decision policy $\pi^\star$ constructed in Theorem~\ref{thm:dp}, its associated optimally controlled process $\big(b^{\pi^\star}_t,i^{\pi^\star}_t\big)_{t=0:T}$ is a discrete-time time-inhomogeneous Markov chain, and we are interested in its transition kernels $\big(P^\star_t\big)_{t=1:T}$, where
\begin{align}
\begin{split}
P^\star_t\big[(b,i)\rightarrow(b',i')\big]&:=\PROB\Big[\big(b^{\pi^\star}_{t},i^{\pi^\star}_{t}\big)=(b',i')\Big|\big(b^{\pi^\star}_{t-1},i^{\pi^\star}_{t-1}\big)=(b,i)\Big] \\
&\hspace{160pt}\forall (b,i),(b',i')\in\CB\times\CI,\;\text{for }t=1,\ldots,T,
\end{split}
\label{eqn:dpalgo-kernel}
\end{align}
as well as its marginal state occupancy probabilities $\big(\overline{P}^\star_t\big)_{t=0:T}$, where
\begin{align}
\overline{P}^\star_t(b,i):=\PROB\Big[\big(b^{\pi^\star}_{t},i^{\pi^\star}_{t}\big)=(b,i)\Big]\qquad\forall (b,i)\in\CB\times\CI,\;\text{for }t=0,\ldots,T.
\label{eqn:dpalgo-marg}
\end{align}
Using these probabilities, we are able to compute important quantities that will aid us in the design and evaluation of cyber risk insurance contracts, such as the expected number of years the insured will spend in each of the Bonus-Malus levels.
Moreover, we are often interested in computing the expected values of other quantities of interest which can be used as evaluation metrics for the cyber risk insurance contract.
Formally, suppose that we are interested in $M$ quantities, and let $\zeta^{(m)}_t:\CB\times\CI\times\R_+\to\R$ for $t=1,\ldots,T$, $m=1,\ldots,M$ satisfy
\begin{align}
\EXP\big[\big|\zeta^{(m)}_t(b,i,W_t)\big|\big]<\infty\qquad \forall (b,i)\in\CB\times\CI,\;\text{for }t=1,\ldots,T,\;\text{for }m=1,\ldots,M.
\label{eqn:qoi}
\end{align}
Let us define
\begin{align}
\overline{\zeta}^{(m)}_t&:=\EXP\big[\zeta^{(m)}_t\big(b^{\pi^\star}_{t-1},i^{\pi^\star}_{t-1},W_t\big)\big]\qquad\text{for }t=1,\ldots,T,\;\text{for }m=1,\ldots,M,\label{eqn:qoi-expval}\\
\overline{Z}_{\zeta^{(m)}}&:=\sum_{t=1}^T\overline{\zeta}_t^{(m)}\hspace{139pt}\qquad\text{for }m=1,\ldots,M.\label{eqn:qoi-aggregate}
\end{align}
Thus, for $m=1,\ldots,M$, $\zeta^{(m)}_t\big(b^{\pi^\star}_{t-1},i^{\pi^\star}_{t-1},W_t\big)$ represents a quantity of interest in year $t$ that (possibly) depends on the Bonus-Malus level and the insurance state at the end of year $t-1$, i.e., $b^{\pi^\star}_{t-1}$ and $i^{\pi^\star}_{t-1}$, and the losses in year $t$, i.e., $W_t$. 
Moreover, for $m=1,\ldots,M$, $\overline{\zeta}^{(m)}_t$ denotes the expected value of the quantity of interest $\zeta^{(m)}_t\big(b^{\pi^\star}_{t-1},i^{\pi^\star}_{t-1},W_t\big)$ in year $t$, and $\overline{Z}_{\zeta^{(m)}}$ denotes the aggregate expected value of this quantity of interest over $T$ years. 
Due to the independence between $\CF_{t-1}$ and $\sigma(W_t)$, one observes that $(\overline{\zeta}^{(m)}_t)_{t=1:T}$ and hence also $\overline{Z}_{\zeta^{(m)}}$ can be computed based on the marginal state occupancy probabilities $\big(\overline{P}^\star_t\big)_{t=0:T}$ defined in (\ref{eqn:dpalgo-marg}). Example~\ref{exp:quantities} below provides concrete examples of quantities of interest that can aid the design and analysis of cyber risk insurance contracts.

%%%%%%%%%%%%%%%%%% Example: Markov chain quantities %%%%%%%%%%%%%%%%%%%%%%%%%%%%%%%%%%%%%%%%
\begin{example}[Quantities of interest]
In this example, we let $M=D+5$, and define the following quantities of interest. 
\begin{enumerate}[label=(\roman*)]
\item \label{sexp:quantities-mitiadoption}For every $d\in\CD:=\{0,1,\ldots,D\}$, let $\zeta^{(d+1)}_{t}(b,i,w)=\INDI_{\{\widehat{d}_t(b,i)=d\}}$. Then, we have
$\overline{\zeta}^{(d+1)}_{t}=$ \linebreak$\PROB\big[\widehat{d}_t\big(b^{\pi^\star}_{t-1},i^{\pi^\star}_{t-1}\big)=d\big]$, which corresponds to the probability that the self-mitigation measure $d$ is adopted in year $t$ under the decision policy $\pi^\star$.
\item Let $\zeta^{(D+2)}_{t}(b,i,w)=e^{-tr}\beta(\widehat{d}_t(b,i))$. Then, $\overline{Z}_{\zeta^{(D+2)}}$ corresponds to the expected value of the discounted total self-mitigation expenses. 
\item \label{sexp:quantities-premium}Let $\zeta^{(D+3)}_{t}(b,i,w)=e^{-tr}\big(\widehat{\iota}_t(b,i)\big(p^{\CB\CM}(b)+\delta_{\Ti\Tn}(t)\INDI_{\{i=\Tn\To\}}+\delta_{\Tr\Te}\INDI_{\{i\ne\Tn\To,i\ne\To\Tn\}}\big)+(1-\widehat{\iota}_t(b,i))\delta_{\To\Tu\Tt}(t)\INDI_{\{i=\To\Tn\}}\big)$.
Then, $\overline{Z}_{\zeta^{(D+3)}}$ corresponds to the expected value of the discounted total payment from the insured to the insurer. 
\item \label{sexp:quantities-mitieffect}Let $\zeta^{(D+4)}_{t}\big(b,i,w=(n,x_1,\ldots,x_n)\big)=e^{-tr}\left(\sum_{k=1}^{n}x_k-L(\widehat{d}_t(b,i),w)\right)$. Then, $\overline{Z}_{\zeta^{(D+4)}}$ corresponds to the expected value of the discounted total loss that is prevented by the self-mitigation measures. 
\item \label{sexp:quantities-benefit}Let $\zeta^{(D+5)}_{t}(b,i,w)=e^{-tr}\Big(\widehat{j}_t(b,i,w)\lambda^{\CB\CM}(b,t,L(\widehat{d}_t(b,i),w))\Big)$, then $\overline{Z}_{\zeta^{(D+5)}}$ corresponds to the expected value of the discounted total insurance compensation the insured receives. 
\end{enumerate}
\label{exp:quantities} 
\end{example}

As a consequence of Theorem~\ref{thm:dp}, let us now introduce a concrete algorithm based on the dynamic programming principle to solve the stochastic optimal control problem (\ref{eqn:stocoptctr}), which is presented in Algorithm~\ref{alg:dpconcrete}. 
In addition to an optimal decision policy $\pi^\star$, Algorithm~\ref{alg:dpconcrete} also computes the state transition kernels $\big(P^\star_t\big)_{t=1:T}$ and the marginal state occupancy probabilities $\big(\overline{P}^\star_t\big)_{t=0:T}$ of the optimally controlled process $\big(b^{\pi^\star}_t,i^{\pi^\star}_t\big)_{t=0:T}$, defined in (\ref{eqn:dpalgo-kernel}) and (\ref{eqn:dpalgo-marg}).
Moreover, given $M$ quantities of interest $\big(\zeta^{(m)}(\cdot,\cdot,\cdot)\big)_{m=1:M}$ as input, Algorithm~\ref{alg:dpconcrete} also computes their expected values defined in (\ref{eqn:qoi-expval}) as well as their aggregate expected values defined in (\ref{eqn:qoi-aggregate}). 
Remark~\ref{rmk:dpalgo} provides detailed explanations of the inputs and outputs of Algorithm~\ref{alg:dpconcrete} as well as explanations of some of the lines in Algorithm~\ref{alg:dpconcrete}. 
Theorem~\ref{thm:dpalgo} shows the correctness of Algorithm~\ref{alg:dpconcrete}. 
Moreover, Remark~\ref{rmk:comp} discusses the computational tractability of Algorithm~\ref{alg:dpconcrete}.

\begin{algorithm}[p]
\KwIn{$\CB$, $\CI$, $\CD$, $\CB\CM$, $\CB\CM_{0}$, $p^{\CB\CM}$, $l^{\CB\CM}_{\Td\Tt\Tb}$, $l^{\CB\CM}_{\max}$, $\beta(\cdot)$, $\gamma(\cdot)$, $\delta_{\Ti\Tn}(\cdot)$, $\delta_{\To\Tu\Tt}(\cdot)$, $\delta_{\Tr\Te}$, $r$, $\big(\zeta^{(m)}_t(\cdot,\cdot,\cdot)\big)_{m=1:M}$}
\KwOut{$\CV_0(0,\Tn\To)$, $\pi^{\star}$, $\big(P^\star_t\big)_{t=1:T}$, $\big(\overline{P}^\star_t\big)_{t=0:T}$, $\big(\overline{\zeta}_{t}^{(m)}\big)_{t=1:T,m=1:M}$, $\big(\overline{Z}_{\zeta^{(m)}}\big)_{m=1:M}$}
\nl $\CV_T(b,i)\leftarrow0$ for all $b\in\CB,i\in\CI$. \label{alglin:dp-value-init} \\
\nl \For{$t=T,T-1,\ldots,1$}{
\nl \label{alglin:dp-forloop-b}\For{$b\in\CB$}{
\nl $\underline{b}\leftarrow\CB\CM(b,0)$, $\overline{b}\leftarrow\max\{\CB\CM(b,c):c\in\R_+\}$. \COMMENT{$\underline{b}$ and $\overline{b}$ are the lowest and highest possible Bonus-Malus levels reachable from level $b$} \label{alglin:dp-b-b-def} \\
\nl \For{$\underline{b}\le b'\le\overline{b}$}{
\nl $\alpha_t(b,b')\leftarrow \CV_{t}(b',\To\Tn)-\CV_{t}(\underline{b},\To\Tn)$. \COMMENT{temporary value to simplify computation} \label{alglin:dp-alpha-def} \\
\nl $\CL_t(b,b')\leftarrow\left\{c\in\R_+:\CB\CM(b,c)=b',c>\alpha_t(b,b')\right\}$. \COMMENT{the set of potential insurance compensation amounts such that the optimal decision is to make a claim and the updated Bonus-Malus level will be $b'$; see also Remark~\ref{rmk:dpalgo}}\label{alglin:dp-L-def} \\
}
\nl \label{alglin:dp-forloop-d}\For{$i\in\CI$, $d\in\CD$}{
\nl $H_t(b,i,d,1)\leftarrow \CV_t(\underline{b},\To\Tn)-\sum_{\underline{b}\le b'\le\overline{b}}\EXP\bigg[\INDI_{\{\CB\CM(b,\lambda^{\CB\CM}(b,t,L(d,W_t)))=b'\}}\Big(\lambda^{\CB\CM}(b,t,L(d,W_t))-\alpha_t(b,b')\Big)^+\bigg]$.\label{alglin:dp-H-def1} \\
\nl $H_t(b,i,d,0)\leftarrow \CV_t\big(\CB\CM_0(b,i)\big)$. \COMMENT{$H_t(b,i,d,\iota)$ for $\iota\in\{0,1\}$ are temporary values to simplify the one-stage optimisation problem in Line~\ref{alglin:dp-d-iota-def}; see also Remark~\ref{rmk:dpalgo}}\label{alglin:dp-H-def2} \\
}
\nl $(\widehat{d}_t(b,i),\widehat{\iota}_t(b,i))\leftarrow\argmin_{d\in\CD,\,\iota\in\{0,1\}} \bigg\{\beta(d)+\iota p^{\CB\CM}(b,t)+\delta_{\Ti\Tn}(t)\INDI_{\{i=\Tn\To,\iota=1\}}+\delta_{\To\Tu\Tt}(t)\INDI_{\{i=\To\Tn,\iota=0\}}+\delta_{\Tr\Te}\INDI_{\{i\ne\To\Tn,i\ne\Tn\To,\iota=1\}}+\EXP\big[L(d,W_t)\big]+H_t(b,i,d,\iota)\bigg\}$.\label{alglin:dp-d-iota-def} \\
\nl $\widehat{j}_t(b,i,w)\leftarrow\INDI_{\{\widehat{\iota}_t(b,i)=1\}}\INDI_{\bigcup_{\underline{b}\le b'\le\overline{b}}\CL_t(b,b')}\big(\lambda^{\CB\CM}(b,t,L(\widehat{d}_t(b,i),w))\big)$.\label{alglin:dp-j-def} \\
\nl $\CV_{t-1}(b,i)\leftarrow e^{-r}\min_{d\in\CD,\,\iota\in\{0,1\}} \bigg\{\beta(d)+\iota p^{\CB\CM}(b,t)+\delta_{\Ti\Tn}(t)\INDI_{\{i=\Tn\To,\iota=1\}}+\delta_{\To\Tu\Tt}(t)\INDI_{\{i=\To\Tn,\iota=0\}}+\delta_{\Tr\Te}\INDI_{\{i\ne\To\Tn,i\ne\Tn\To,\iota=1\}}+\EXP\big[L(d,W_t)\big]+H_t(b,i,d,\iota)\bigg\}$.\label{alglin:dp-value-update} \\
\nl $P^\star_t\big[(b,i)\rightarrow(b',i')\big]\leftarrow 0$ for all $(b',i')\in\CB\times\CI$. \COMMENT{initialise all transition probabilities \\ to~0} \\
\nl \If{$\widehat{\iota}_t(b,i)=1$}{
\nl \For{$\underline{b}<b'\le\overline{b}$}{
\nl $P^\star_t\big[(b,i)\rightarrow(b',\To\Tn)\big]\leftarrow {\PROB\Big[\lambda^{\CB\CM}(b,t,L(\widehat{d}_t(b,i),W_t))\in\CL_t(b,b')\Big]}$. \COMMENT{compute the transition probability to level $b'$}\label{alglin:dp-kern1} \\
}
\nl $P^\star_t\big[(b,i)\rightarrow(\underline{b},\To\Tn)\big]\leftarrow 1-\sum_{\underline{b}<b'\le\overline{b}}P^\star_t\big[(b,i)\rightarrow(b',\To\Tn)\big]$. \label{alglin:dp-kern2} \COMMENT{compute the transition probability to the lowest level}\\
}\nl \Else{
\nl $P^\star_t\big[(b,i)\rightarrow\CB\CM_0(b,i)\big]\leftarrow 1$. \COMMENT{when the optimal decision is to not purchase insurance, the transition is deterministic} \label{alglin:dp-kern3} \\
}
}
}
\nl $\overline{P}_{0}^{\star}(0,\Tn\To)\leftarrow 1$, $\overline{P}_{0}^{\star}(b,i)\leftarrow 0$ for all $(b,i)\ne(0,\Tn\To)$. \label{alglin:dp-marg-init} \\
\nl \For{$t=1,2,\ldots,T$}{
\nl For all $(b,i)\in\CB\times\CI$, $\overline{P}_{t}^{\star}(b,i)\leftarrow\sum_{(b',i')\in\CB\times\CI}P^\star_t[(b',i')\rightarrow(b,i)]\overline{P}_{t-1}^{\star}(b',i')$. \label{alglin:dp-marg-iter} \\
\nl For $m=1,\ldots,M$, $\overline{\zeta}^{(m)}_t\leftarrow\sum_{(b,i)\in\CB\times\CI}\EXP[\zeta^{(m)}_t(b,i,W_t)]\overline{P}_{t-1}^{\star}(b,i)$. \label{alglin:dp-qoi} \\
}
\nl For $m=1,\ldots,M$, $\overline{Z}_{\zeta^{(m)}}\leftarrow\sum_{t=1}^T\overline{\zeta}_t^{(m)}$. \label{alglin:dp-qoi-aggregate} \\
\nl Define $\pi^\star=(d^\star_t,\iota^\star_t,j^\star_t)_{t=1:T}$ by (\ref{eqn:dpconcept-opt-process}).\\
\nl \Return $\CV_0(0,\Tn\To)$, $\pi^{\star}$, $\big(P^\star_t\big)_{t=1:T}$, $\big(\overline{P}^\star_t\big)_{t=0:T}$, $\big(\overline{\zeta}_{t}^{(m)}\big)_{t=1:T,m=1:M}$, $\big(\overline{Z}_{\zeta^{(m)}}\big)_{m=1:M}$. 
    \caption{{\bf Dynamic Programming for Optimal Cybersecurity Provisioning}}
    \label{alg:dpconcrete}
\end{algorithm}

\begin{remark}[Details about Algorithm~\ref{alg:dpconcrete}]
The inputs of Algorithm~\ref{alg:dpconcrete} are explained as follows.
\begin{itemize}
\item $\CB$, $\CI$, $\CD$, $\CB\CM$, $\CB\CM_{0}$, $p^{\CB\CM}$, $l^{\CB\CM}_{\Td\Tt\Tb}$, $l^{\CB\CM}_{\max}$, $\beta(\cdot)$, $\gamma(\cdot)$, $\delta_{\Ti\Tn}(\cdot)$, $\delta_{\To\Tu\Tt}(\cdot)$, $\delta_{\Tr\Te}$ are specified in Section~\ref{ssec:lossmodel}, Section~\ref{ssec:cyberinsurance}, and Section~\ref{ssec:bonusmalus}.
\item $r\ge0$ specifies the discount factor $e^{-r}$ in the objective of the optimal cybersecurity provisioning process.
\item $\big(\zeta^{(m)}_t(\cdot,\cdot,\cdot)\big)_{m=1:M}$ specify $M$ quantities of interest that satisfy (\ref{eqn:qoi}) whose expected values (\ref{eqn:qoi-expval}) and aggregate expected values (\ref{eqn:qoi-aggregate}) will be computed in Algorithm~\ref{alg:dpconcrete}. Some examples of quantities of interest are shown in Example~\ref{exp:quantities}. 
\end{itemize}
The outputs of Algorithm~\ref{alg:dpconcrete} are explained as follows.
\begin{itemize}
\item $\pi^{\star}$ is the optimal decision policy computed by the dynamic programming algorithm and $\CV_0(0,\Tn\To)$ is equal to the optimal value $\BBV_0$ of the stochastic optimal control problem. The optimality of $\pi^\star$ is shown in Theorem~\ref{thm:dpalgo}\ref{sthm:dpalgo1}. 
\item $\big(P^\star_t\big)_{t=1:T}$ are the transition kernels of the discrete-time time-inhomogeneous Markov chain \linebreak$\big(b^{\pi^\star}_t,i^{\pi^\star}_t\big)_{t=0:T}$ that are defined in (\ref{eqn:dpalgo-kernel}). $\big(\overline{P}^\star_t\big)_{t=0:T}$ are the marginal state occupancy probabilities of $\big(b^{\pi^\star}_t,i^{\pi^\star}_t\big)_{t=0:T}$ defined in (\ref{eqn:dpalgo-marg}). The correctness of these outputs is shown in Theorem~\ref{thm:dpalgo}\ref{sthm:dpalgo2}. 
\item $\big(\overline{\zeta}_{t}^{(m)}\big)_{t=1:T,m=1:M}$ are the expected values of the quantities of interest $\big(\zeta^{(m)}_t(\cdot,\cdot,\cdot)\big)_{m=1:M}$ defined in (\ref{eqn:qoi-expval}). $\big(\overline{Z}_{\zeta^{(m)}}\big)_{m=1:M}$ are the aggregate expected values of these quantities of interest defined in (\ref{eqn:qoi-aggregate}). The correctness of these outputs is shown in Theorem~\ref{thm:dpalgo}\ref{sthm:dpalgo3}. 
\end{itemize}
Below is a list explaining some of the lines in Algorithm~\ref{alg:dpconcrete}. 
\begin{itemize}
\item Line~\ref{alglin:dp-b-b-def} computes the lowest possible Bonus-Malus level $\underline{b}$ and the highest possible Bonus-Malus level $\overline{b}$ that the insured can transition to from level $b$. We only iterate through the Bonus-Malus levels between $\underline{b}$ and $\overline{b}$ in the remainder of the for-loop starting from Line~\ref{alglin:dp-forloop-b}.
\item Line~\ref{alglin:dp-L-def} computes a set $\CL_t(b,b')\subseteq\R_+$ which contains all potential insurance compensation amounts (i.e., $\lambda^{\CB\CM}(b,t,L(d,W_t))$) such that the optimal decision of the insured is to make a claim, and that the updated Bonus-Malus level after making the claim is $b'$. 
In particular, the following claim holds true.
\begin{enumerate}[label=\underline{Claim~\arabic*}:,leftmargin=3.8em]
\item for all $(b,i)\in\CB\times\CI$, $d\in\CD$, $w\in\CW$, the following equivalence holds:
\begin{align*}
\begin{split}
g_{t}(b,i,d,1,1,w)+\CV_{t}\big(f_{t}(b,i,d,1,1,w)\big)&< g_{t}(b,i,d,1,0,w)+\CV_{t}\big(f_{t}(b,i,d,1,0,w)\big)\\
&\Updownarrow\\
\lambda^{\CB\CM}(b,t,L(d,w))&\in\bigcup_{\underline{b}\le b'\le\overline{b}}\CL_t(b,b').
\end{split}
\end{align*}
\end{enumerate}
This claim is justified in (\ref{eqn:dpalgo-claim1-proof1}) and (\ref{eqn:dpalgo-claim1-proof2}) in the proof of Theorem~\ref{thm:dpalgo}\ref{sthm:dpalgo1}. 
Moreover, notice that $\CL_t(b,b')$ is an interval since the set $\left\{c\in\R_+:\CB\CM(b,c)=b'\right\}$ is an interval in $\R_+$ by the assumption that the function $\CB\CM(b,\cdot)$ is non-decreasing.
\item In the for-loop starting from Line~\ref{alglin:dp-forloop-d}, we compute temporary values $(H_t(b,i,d,\iota))_{d\in\CD,\iota\in\{0,1\}}$ which are subsequently used in Line~\ref{alglin:dp-d-iota-def} and Line~\ref{alglin:dp-value-update}. In fact, the following claim holds true.
\begin{enumerate}[label=\underline{Claim~\arabic*}:,leftmargin=3.8em]
\setcounter{enumi}{1}
\item for all $(b,i)\in\CB\times\CI$, $d\in\CD$, $\iota\in\{0,1\}$, it holds that
\begin{align*}
\begin{split}
&\phantom{=}\;\;\EXP\bigg[\min_{j\in\{0,1\}}\Big\{g_{t}(b,i,d,\iota,j,W_t)+\CV_{t}\big(f_{t}(b,i,d,\iota,j,W_t)\big)\Big\}\bigg]\\
&=\beta(d)+\iota p^{\CB\CM}(b,t)+\delta_{\Ti\Tn}(t)\INDI_{\{i=\Tn\To,\iota=1\}}+\delta_{\To\Tu\Tt}(t)\INDI_{\{i=\To\Tn,\iota=0\}}\\
&\qquad+\delta_{\Tr\Te}\INDI_{\{i\ne\To\Tn,i\ne\Tn\To,\iota=1\}}+\EXP\big[L(d,W_t)\big]+H_t(b,i,d,\iota).
\end{split}
\end{align*}
\end{enumerate}
This claim is justified in (\ref{eqn:dpalgo-claim2-proof1}) and (\ref{eqn:dpalgo-claim2-proof2}) in the proof of Theorem~\ref{thm:dpalgo}\ref{sthm:dpalgo1}. 
\item The main step of the dynamic programming algorithm are in Line~\ref{alglin:dp-value-init}, Line~\ref{alglin:dp-d-iota-def}, Line~\ref{alglin:dp-j-def}, and Line~\ref{alglin:dp-value-update}. In Line~\ref{alglin:dp-value-init}, the values of $(\CV_T(b,i))_{(b,i)\in\CB\times\CI}$ are all initialised to 0 according to (\ref{eqn:dpvalue}). 
In Line~\ref{alglin:dp-d-iota-def} and Line~\ref{alglin:dp-j-def}, the one-stage optimisers $(\widehat{d}_t,\widehat{\iota}_t,\widehat{j}_t)$ are computed as in (\ref{eqn:dpconcept-d-iota}) and (\ref{eqn:dpconcept-j}), due to \underline{Claim~1} and \underline{Claim~2} above. 
In Line~\ref{alglin:dp-value-update}, the value of $\CV_t(b,i)$ is computed as in (\ref{eqn:dpvalue}), due to \underline{Claim~2} above.
\end{itemize}
\label{rmk:dpalgo}
\end{remark}

%%%%%%%%%%%%%%%%%%%% Theorem: Dynamic Programming algorithm %%%%%%%%%%%%%%%%%%%%%%%%%%%
\begin{theorem}
Let us assume that all inputs of Algorithm~\ref{alg:dpconcrete} are set according to Remark~\ref{rmk:dpalgo}, and let $\CV_0(0,\Tn\To)$, $\pi^{\star}$, $\big(P^\star_t\big)_{t=1:T}$, $\big(\overline{P}^\star_t\big)_{t=0:T}$, $\big(\overline{\zeta}_{t}^{(m)}\big)_{t=1:T,m=1:M}$, and $\big(\overline{Z}_{\zeta^{(m)}}\big)_{m=1:M}$ be the outputs of Algorithm~\ref{alg:dpconcrete}. 
Then, the following statements hold.
\begin{enumerate}[label=(\roman*)]
\item \label{sthm:dpalgo1}$\CV_0(0,\Tn\To)=\BBV_0$ and $\pi^\star$ is an optimal decision policy for (\ref{eqn:stocoptctr}), i.e., $\CV_0(0,\Tn\To)=V^{\pi^\star}_0=\BBV_0$. 

\item \label{sthm:dpalgo2}$\big(P^\star_t\big)_{t=1:T}$ are equal to the transition kernels of $\big(b^{\pi^\star}_t,i^{\pi^\star}_t\big)_{t=0:T}$ defined in (\ref{eqn:dpalgo-kernel}), and $\big(\overline{P}^\star_t\big)_{t=0:T}$ are equal to the marginal state occupancy probabilities of $\big(b^{\pi^\star}_t,i^{\pi^\star}_t\big)_{t=0:T}$ defined in (\ref{eqn:dpalgo-marg}).

\item \label{sthm:dpalgo3}$\big(\overline{\zeta}_{t}^{(m)}\big)_{t=1:T,m=1:M}$ are equal to the expected values of the quantities of interest $\big(\zeta^{(m)}_t(\cdot,\cdot,\cdot)\big)_{m=1:M}$ defined in (\ref{eqn:qoi-expval}), and $\big(\overline{Z}_{\zeta^{(m)}}\big)_{m=1:M}$ are equal to the aggregate expected values of these quantities of interest defined in (\ref{eqn:qoi-aggregate}).

\end{enumerate}
\label{thm:dpalgo}
\end{theorem}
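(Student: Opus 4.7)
The plan is to establish part~(i) by verifying that $(\CV_t)_{t=0:T}$ and $(\widehat{d}_t,\widehat{\iota}_t,\widehat{j}_t)_{t=1:T}$ as computed by Algorithm~\ref{alg:dpconcrete} agree with their definitions in (\ref{eqn:dpvalue})--(\ref{eqn:dpconcept-j}). Once this is done, Theorem~\ref{thm:dp} directly gives $\CV_0(0,\Tn\To)=V^{\pi^\star}_0=\BBV_0$. The whole argument reduces to proving the two claims stated in Remark~\ref{rmk:dpalgo}: Claim~1 justifies the formula for $\widehat{j}_t$ in Line~\ref{alglin:dp-j-def}, while Claim~2 justifies the updates of $\CV_{t-1}$ and of $(\widehat{d}_t,\widehat{\iota}_t)$ in Lines~\ref{alglin:dp-d-iota-def}--\ref{alglin:dp-value-update}; everything else in the algorithm is algebraic bookkeeping.

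I prove Claim~1 by a direct one-stage comparison: fixing $(b,i,d,w)$ and $\iota=1$, expanding $g_t$ and $f_t$ via (\ref{eqn:socstate})--(\ref{eqn:soccost}) gives
\begin{align*}
\bigl[g_t(b,i,d,1,1,w)+\CV_t(f_t(b,i,d,1,1,w))\bigr]-\bigl[g_t(b,i,d,1,0,w)+\CV_t(f_t(b,i,d,1,0,w))\bigr]=-\lambda^{\CB\CM}(b,t,L(d,w))+\alpha_t(b,b'),
\end{align*}
where $b':=\CB\CM(b,\lambda^{\CB\CM}(b,t,L(d,w)))\in[\underline{b},\overline{b}]$ is the uniquely determined target Bonus-Malus level. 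This difference is strictly negative iff $\lambda^{\CB\CM}(b,t,L(d,w))>\alpha_t(b,b')$, which by Line~\ref{alglin:dp-L-def} is exactly the condition $\lambda^{\CB\CM}(b,t,L(d,w))\in\CL_t(b,b')\subseteq\bigcup_{\underline{b}\le b''\le\overline{b}}\CL_t(b,b'')$, yielding Claim~1. Claim~2 then follows by taking expectations: for $\iota=0$, the admissibility constraint $\{\iota=0,j=1\}=\emptyset$ forces $j=0$ and the next state is the deterministic $\CB\CM_0(b,i)$, matching $H_t(b,i,d,0)$; for $\iota=1$, Claim~1 yields $\min_j\{g_t+\CV_t\}$ equal to the $j=0$ value minus $\bigl(\lambda^{\CB\CM}-\alpha_t(b,\CB\CM(b,\lambda^{\CB\CM}))\bigr)^+$, and partitioning the sample space by the target level $b'$ reproduces $H_t(b,i,d,1)$ as in Line~\ref{alglin:dp-H-def1}. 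The main (and only genuinely delicate) obstacle is the careful bookkeeping of the indicator terms in $g_t$ together with the observation that $\bigl\{\CB\CM(b,\lambda^{\CB\CM})=b'\bigr\}_{\underline{b}\le b'\le\overline{b}}$ partitions $\Omega$, which is precisely where the monotonicity assumption on $\CB\CM(b,\cdot)$ enters.

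For part~(ii), I note that $W_t$ is independent of $\CF_{t-1}$ and hence of $(b^{\pi^\star}_{t-1},i^{\pi^\star}_{t-1})$, so (\ref{eqn:markovprocess}) makes $(b^{\pi^\star}_t,i^{\pi^\star}_t)_{t=0:T}$ a time-inhomogeneous Markov chain. When $\widehat{\iota}_t(b,i)=0$, $f_t$ returns the deterministic value $\CB\CM_0(b,i)$, giving Line~\ref{alglin:dp-kern3}. When $\widehat{\iota}_t(b,i)=1$, Claim~1 implies that the post-claim state equals $(b',\To\Tn)$ with $b'>\underline{b}$ exactly on the event $\bigl\{\lambda^{\CB\CM}(b,t,L(\widehat{d}_t(b,i),W))\in\CL_t(b,b')\bigr\}$, yielding Line~\ref{alglin:dp-kern1}; all remaining mass flows to $(\underline{b},\To\Tn)$, giving Line~\ref{alglin:dp-kern2}. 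The forward recursion in Line~\ref{alglin:dp-marg-iter} together with the initialisation in Line~\ref{alglin:dp-marg-init} then yields the marginal occupancy probabilities by a standard induction on $t$. Finally, part~(iii) follows from the same independence of $W_t$ and $\CF_{t-1}$: for each $(b,i)$ we have
\begin{align*}
\EXP\bigl[\zeta^{(m)}_t(b^{\pi^\star}_{t-1},i^{\pi^\star}_{t-1},W_t)\INDI_{\{(b^{\pi^\star}_{t-1},i^{\pi^\star}_{t-1})=(b,i)\}}\bigr]=\EXP\bigl[\zeta^{(m)}_t(b,i,W)\bigr]\,\overline{P}^\star_{t-1}(b,i),
\end{align*}
so summing over $(b,i)\in\CB\times\CI$ reproduces Line~\ref{alglin:dp-qoi} and summing over $t$ reproduces Line~\ref{alglin:dp-qoi-aggregate}.
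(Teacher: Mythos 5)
Your proposal is correct and follows essentially the same route as the paper's proof: part~(i) is reduced to Claims~1 and~2 of Remark~\ref{rmk:dpalgo}, which you verify by the same one-stage expansion of $g_t$ and $f_t$ and the same partition over target Bonus-Malus levels, and parts~(ii)--(iii) use the identical independence and event-decomposition arguments. The only cosmetic difference is that for $\iota=0$ you invoke the policy admissibility constraint to dismiss $j$, whereas the paper simply notes that $g_t$ and $f_t$ do not depend on $j$ when $\iota=0$; the conclusion is the same.
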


\begin{proof}
See Appendix~\ref{apx:proofs}. 
\end{proof}

%%%%%%%%%%%%%%%%%%%%%% Remark: numerics %%%%%%%%%%%%%%%%%%%%%%%%%%%%%%%%
\begin{remark}
Assume that the following quantities either admit an analytically tractable expression, or can be efficiently approximated to high numerical precision:
\begin{enumerate}[label=(\roman*)]
\item the expectation $\EXP\big[L(d,W_t)\big]=\EXP[N_t]\EXP\big[(X^{(t)}-\gamma(d))^+\big]$, where $d\in\CD$ and $t\in\{1,\ldots,T\}$; \label{srmk:comp1}
\item the expectation $\EXP\!\left[\INDI_{I}(\lambda^{\CB\CM}(b,t,L(d,W_t)))\Big(\lambda^{\CB\CM}(b,t,L(d,W_t))-\alpha\Big)^+\right]$, where $b\in\CB$, $d\in\CD$, $t\in\{1,\ldots,T\}$, $\alpha\ge0$, and $I\subset\R_+$ is an interval; \label{srmk:comp2}
\item the probability $\PROB\left[\lambda^{\CB\CM}(b,t,L(d,W_t))\in I\right]$, where $b\in\CB$, $d\in\CD$, $t\in\{1,\ldots,T\}$, and $I\subset\R_+$ is an interval; \label{srmk:comp3}
\item the expectation $\EXP\big[\zeta^{(m)}_t(b,i,W_t)\big]$, where $b\in\CB$, $i\in\CI$, $m\in\{1,\ldots,M\}$, $t\in\{1,\ldots,T\}$. \label{srmk:comp4}
\end{enumerate}
Then, Algorithm~\ref{alg:dpconcrete} is computationally tractable, meaning that quantities in Algorithm~\ref{alg:dpconcrete} can either be computed exactly or efficiently approximated to high numerical precision. 
In addition, since $\lambda^{\CB\CM}(b,t,L(d,W_t))$ is bounded above by $l^{\CB\CM}_{\max}(b,t)$, we can assume without loss of generality that the interval $I$ in \ref{srmk:comp2} and \ref{srmk:comp3} above is bounded. A concrete model in which Algorithm~\ref{alg:dpconcrete} is computationally tractable will be introduced in Section~\ref{sec:g-and-h}. 
\label{rmk:comp}
\end{remark}

\subsection{Discussion About the Pricing of the Insurance Premium}
\label{ssec:pricing}
One important consideration of the cyber risk insurer is the choice of the annual premium. Conventionally, for a fixed self-mitigation measure $d\in\CD$, a fixed deductible $l_{\Td\Tt\Tb}\ge 0$, and a fixed maximum compensation $l_{\max}\ge0$, one may consider the risk premium $\EXP[(L(d,W_t)-l_{\Td\Tt\Tb})^+\wedge l_{\max}]$. 
However, these quantities are variable in our dynamic model with the Bonus-Malus system.
Moreover, the insured may choose to withdraw from the cyber risk insurance contract, thus further complicating the matter.

Even though the optimal cybersecurity provisioning process minimises the total cybersecurity cost from the insured's point of view, the insurer can use the quantities of interest computed from this process to evaluate the insurance contract and compare different contract design options.
One of the evaluation criteria is how high the insurer can price the premium while retaining their customers, that is, what is the highest premium such that the cyber risk insurance policy remains attractive to a rational insured. 
To answer this question, one can analyse the relationship between the marginal state occupancy probabilities $\big(\overline{P}^\star_t\big)_{t=0:T}$ and the premium.
When the premium is high, the insured may opt not to purchase the cyber risk insurance policy, or may withdraw from the insurance contract prematurely due to a transition into a higher Bonus-Malus level. 
This evaluation criterion is crucial for the insurer, since a high retention rate would allow the insurer to build a large pool of customers with similar risk exposures.

Another criterion is the range of premium under which a rational insured would choose to adopt a self-mitigation measure. 
The reasoning here is that if the insurance premium is set too low, then the insured will see cyber risk insurance as a low-cost alternative to self-mitigation measures and will thus choose not to adopt any self-mitigation measure.  
This range can be determined by studying the relationship between the values of
$\big(\overline{\zeta}^{(d+1)}_t\big)_{t=1:T,d\in\CD}$ in Example~\ref{exp:quantities}\ref{sexp:quantities-mitiadoption} and the premium.
Moreover, the value of $\overline{Z}_{\zeta^{(D+4)}}$ in Example~\ref{exp:quantities}\ref{sexp:quantities-mitieffect} reflects the discounted total amount of loss prevented by the adoption of self-mitigation measures, and its relationship with the premium can also be studied.
These are important criteria due to the positive externalities generated by organisations adopting self-mitigation measures against cyber threats.
Combining the criterion of loss prevention with the criterion of insurance customer retention, one can study whether the adoption of cyber risk insurance disincentivises the adoption of self-mitigation measures, which is an issue known as moral hazard.

An important question related to the design of the insurance contract is how much the insurer can financially gain from offering the cyber risk insurance policy. One simple criterion is to look at the discounted expectation of the difference between the total payment from the insured to the insurer and the total insurance compensation. Specifically, let $\overline{Z}_{\Ti\Tn\Ts}\equiv\overline{Z}_{\zeta^{(D+3)}}$ in Example~\ref{exp:quantities}\ref{sexp:quantities-premium} and let $\overline{Z}_{\Tc\Tp}\equiv\overline{Z}_{\zeta^{(D+5)}}$ in Example~\ref{exp:quantities}\ref{sexp:quantities-benefit}. 
Then, one can see $\overline{Z}_{\Ti\Tn\Ts}-\overline{Z}_{\Tc\Tp}$ as the insurer's expected profit from the cyber risk insurance policy.
However, this approach implicitly makes the zero-sum assumption, i.e., the insurer's expected profit is the negative of the expected net payment from the insured to the insurer. 
Since we assume that the insured always has the option to not purchase cyber risk insurance, which would result in zero net payment to the insurer, the quantity $\overline{Z}_{\Ti\Tn\Ts}-\overline{Z}_{\Tc\Tp}$ will always be non-positive. 
In particular, in the case where the insurance policy is not purchased, the quantity $\overline{Z}_{\Ti\Tn\Ts}-\overline{Z}_{\Tc\Tp}$ will be exactly 0.
%We would like to remark that we are not attempting to answer whether cyber risk is insurable or whether cyber risk insurance is profitable for the insurer, since the quantity $\overline{Z}_{\Ti\Tn\Ts}-\overline{Z}_{\Tc\Tp}$ does not correspond to the actual profit of an insurer offering such insurance products in a real market. 
In reality, there are a few important practical considerations that this zero-sum approach neglects.
First, the insured and the insurer often perceive cyber risk differently. 
Specifically, this is due to the insured having impartial or incomplete information as to the frequency and severity of the risk they face relative to an insurer who is privy to a portfolio of insured, giving them a competitive advantage in their assessments of such loss processes.
%On the other hand, an insurer offering such products will build up a loss database which gives them a competitive advantage in knowing the true frequency and severity of such events for their potential customer base. 
Second, the zero-sum approach assumes that the insured will act optimally according to the optimal cybersecurity provisioning process.
In reality, the insured will typically act sub-optimally due to the practical complexity of the required computation being beyond the scope and the available resources of the decision makers \citep{targino2013optimal}.
Third, regulatory transparency requirements will also play an important role in determining the profit margins that may arise if such products are issued.
The lack of these considerations indicates some of the limitations of this zero-sum approach. 
Nevertheless, the relationship between the quantity $\overline{Z}_{\Ti\Tn\Ts}-\overline{Z}_{\Tc\Tp}$ and the premium does shed some light on an important aspect of cyber risk insurance contract, that is, whether it is possible to make this quantity arbitrarily close to 0 by setting the premium appropriately. 
If an insurance contract design allows such possibility, then there is a way to set the premium such that the insured becomes indifferent between purchasing and not purchasing the insurance policy up to an arbitrarily small tolerance, at which point the value of $\overline{Z}_{\Ti\Tn\Ts}-\overline{Z}_{\Tc\Tp}$ would be approximately maximised. 
Then, the real profit of the insurer should become more preferable than in the case where the value of $\overline{Z}_{\Ti\Tn\Ts}-\overline{Z}_{\Tc\Tp}$ is far from zero.
On the contrary, if a design does not allow such possibility, then there is no way to set the premium such that the insured becomes approximately indifferent between purchasing and not purchasing the insurance policy, indicating that there is space for improving the contract design in order to hopefully further improve the real profit of the insurer.
We will discuss the aforementioned important criteria when evaluating the design of cyber risk insurance contracts with a concrete example in Section~\ref{sec:exp}.

\section{Modelling Cyber Loss with Truncated g-and-h Distribution}
\label{sec:g-and-h}

In this section, we adopt specific distributional assumptions about the random variables $\big(X^{(t)}\big)_{t=1:T}$, where $X^{(t)}$ corresponds to the severity of a single cyber loss event in year $t$. 
Studies such as \citep{maillart2010heavy, wheatley2016extreme} and \citep{dacorogna2023building} have shown that the severity of cyber loss events have left-skewed and heavy-tailed distributions. 
A commonly adopted family of distributions in statistical analyses of cyber event data is the Generalised Pareto distribution; see, e.g., \citep{farkas2021cyber, malavasi2022cyber, dacorogna2023building}. 
Generalised Pareto distributions naturally appear when analysing heavy-tailed random variables as a consequence of the Pickands--Balkema--De Haan theorem from extreme value theory; see, e.g., \citep{embrechts1997modelling}.
Since the Generalised Pareto distribution arises as an approximation of the tail part of a distribution, it is also common to model the body and the tail parts of a distribution separately; see, e.g., \citep{zangerle2023modelling} which models cyber losses below a certain threshold by a normal distribution and models cyber losses exceeding the threshold by a Generalised Pareto distribution.
It is well-known that the g-and-h family introduced by \citet{tukey1977exploratory} contains distributions with a wide range of skewness and kurtosis (see, e.g., Figure~3 of \citep{dutta2006tale}), which makes it suitable for modelling Operational Risk \citep{dutta2006tale,peters2006bayesian,cruz2015fundamental,peters2015advances}. 
Using the results from empirical analyses of a real Operational Risk dataset, \citet{dutta2006tale} identified the following advantages of the g-and-h distribution compared to other more commonly adopted alternatives (including the Generalised Pareto distribution): 
\begin{itemize}
\item it is flexible and it fits well to real data under many different circumstances, e.g., when considering losses of a company as a whole and when considering individual business lines or event types; and
\item it produces realistic estimations of the Operational Risk capital.
\end{itemize}
Moreover, it is straightforward and efficient to simulate random samples from the g-and-h distribution.
The parameters in the g-and-h distribution can be robustly estimated based on quantiles~\citep{xu2014robust} or L-moments~\citep{peters2016estimating}. 
Due to these properties, we adopt the g-and-h distribution as a particular model for the severity of cyber loss events in this section. 
We would like to remark that the aforementioned studies using the Generalised Pareto distribution consider severity distributions with parameters that depend on a large collection of covariates, including those related to the type, size, and location of the organisation, as well as the type of cyber loss event. 
Since we are interested in analysing the optimal cybersecurity provisioning process from the perspective of a single organisation facing a single type of cyber loss event, we assume that the parameters in the severity distribution have been adjusted to the characteristics of the organisation and the type of cyber loss event of interest. 
In practice, calibration of the cyber loss model can combine information from historical data from other organisations, internal data, and expert opinions \citep{malavasi2022cyber}.

The g-and-h distribution is a four-parameter family of distributions, given by the following definition:
\begin{align}
\begin{split}
\widetilde{X}\text{ follows a g-and-h}&(\mu,\varsigma,g,h)\text{ distribution, if}\\
\widetilde{X}&=\mu+\varsigma Y_{g,h}(Z),\\
\text{where }Z&\sim\text{Normal}(0,1),\\
Y_{g,h}(z)&:=\begin{cases}
\frac{\exp(gz)-1}{g}\exp\left(\frac{hz^2}{2}\right) & \text{if }g\ne 0,\\
z\exp\left(\frac{hz^2}{2}\right) & \text{if }g=0,
\end{cases}
\end{split}
\label{eqn:g-and-hdef}
\end{align}
where $\mu\in\R$ is the location parameter, $\varsigma>0$ is the scale parameter, $g\in\R$ is the skewness parameter, and $h\ge0$ is the kurtosis parameter. 
In this paper, we assume that the parameters $\mu$, $\varsigma$, $g$, and $h$ are fixed and known.  
By (\ref{eqn:g-and-hdef}), the distribution function of $\widetilde{X}$ is given by
\begin{align}
\begin{split}
F_{\widetilde{X}}(x):=\PROB[\widetilde{X}\le x]=\Phi\left(Y_{g,h}^{-1}\left(\tfrac{x-\mu}{\varsigma}\right)\right),
\end{split}
\end{align}
where $Y_{g,h}^{-1}$ denotes the inverse function of $Y_{g,h}$, and $\Phi$ denotes the distribution function of the standard normal distribution. 
Even though $Y_{g,h}^{-1}$ cannot be expressed analytically, it can be efficiently evaluated using a standard root-finding procedure such as the bisection method and Newton's method. Therefore, we treat $Y_{g,h}^{-1}$ as a tractable function. 
The g-and-h distribution has the property that the $m$-th moment of $\widetilde{X}$ exists when $h<\frac{1}{m}$ (see, e.g., Appendix~D of \citep{dutta2006tale}). Since we consider losses that are positively skewed and have finite expectation (see Remark~\ref{rmk:finite-expectation}), from now on, we assume that $g>0$ and $0\le h<1$. 

Since cyber losses are positive, we introduce a truncated version of the g-and-h distribution. 
\begin{definition}[Truncated g-and-h distribution] For $\mu\in\R,\varsigma>0,g>0,h\in[0,1)$, the random variable $X$ has truncated g-and-h distribution with parameters $\mu,\varsigma,g,h$, denoted by $X\sim\text{Tr-g-and-h}(\mu,\varsigma,g,h)$, if $X$ has distribution function 
\begin{align}
F_X(x):=\PROB[X\le x]=\PROB[\widetilde{X}\le x|\widetilde{X}>0],
\label{eqn:tr-g-and-hdef}
\end{align}
where $\widetilde{X}\sim\text{g-and-h}(\mu,\varsigma,g,h)$. 
\label{def:tr-g-and-h}
\end{definition}

The next lemma shows some useful properties of the truncated g-and-h distribution.
%%%%%%%%%%%%%%%%%%%%%%%%% Lemma: Truncated g-and-h distribution %%%%%%%%%%%%%%%%%
\begin{lemma}
Suppose that $X\sim\text{Tr-g-and-h}(\mu,\varsigma,g,h)$ for $\mu\in\R,\varsigma>0,g>0,h\in[0,1)$. Then, the following statements hold.
\begin{enumerate}[label=(\roman*)]
\item The distribution function of $X$ is given by
\begin{align}
\begin{split}
F_{X}(x)=\begin{cases}
\frac{F_{\widetilde{X}}(x)-F_{\widetilde{X}}(0)}{1-F_{\widetilde{X}}(0)} & \text{if }x>0,\\
0 & \text{if }x\le0,
\end{cases}
\end{split}
\label{eqn:tr-g-and-h-df}
\end{align}
where $F_{\widetilde{X}}$ is defined in (\ref{eqn:g-and-hdef}). 
\label{slem:tr-g-and-h1}
\item Suppose that $U\sim\text{Uniform}[0,1]$, and let
\begin{align}
X_U:=\mu+\varsigma Y_{g,h}\Big(\Phi^{-1}\Big(U+(1-U)F_{\widetilde{X}}(0)\Big)\Big),
\label{eqn:tr-g-and-h-inv}
\end{align}
then $X_U\sim\text{Tr-g-and-h}(\mu,\varsigma,g,h)$. 
\label{slem:tr-g-and-h2}
\item For any $\gamma\ge0$, the expectation $\EXP\big[(X-\gamma)^+\big]$ is given by:
\begin{align}
\begin{split}
\EXP\big[(X-\gamma)^+\big]&=\frac{\varsigma}{(1-F_{\widetilde{X}}(0))g\sqrt{1-h}}\Bigg[\exp\left(\frac{g^2}{2(1-h)}\right)\Phi\left(\left(\frac{g}{1-h}-Y_{g,h}^{-1}\left(\tfrac{\gamma-\mu}{\varsigma}\right)\right)\sqrt{1-h}\right)\\
&\qquad-\Phi\left(-Y_{g,h}^{-1}\left(\tfrac{\gamma-\mu}{\varsigma}\right)\sqrt{1-h}\right)\Bigg]+\frac{(\mu-\gamma)(1-F_{\widetilde{X}}(\gamma))}{1-F_{\widetilde{X}}(0)}.
\end{split}
\label{eqn:tr-g-and-h-mom}
\end{align}
\label{slem:tr-g-and-h3}
\end{enumerate}
\label{lem:tr-g-and-h}
\end{lemma}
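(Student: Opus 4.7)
For part~\ref{slem:tr-g-and-h1}, the plan is to unpack Definition~\ref{def:tr-g-and-h} directly. For $x>0$, the event $\{\widetilde{X}\le x,\widetilde{X}>0\}$ equals $\{0<\widetilde{X}\le x\}$, so the conditional probability $\PROB[\widetilde{X}\le x\mid\widetilde{X}>0]$ equals $(F_{\widetilde{X}}(x)-F_{\widetilde{X}}(0))/(1-F_{\widetilde{X}}(0))$; for $x\le 0$ the event is empty, giving~$0$. Since $g>0$ and $h\in[0,1)$, I will first record the fact that $Y_{g,h}$ is a strictly increasing $C^1$-bijection from $\R$ onto $\R$, which makes $Y_{g,h}^{-1}$ well defined and is needed throughout the rest of the proof.

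For part~\ref{slem:tr-g-and-h2}, the strategy is inverse-transform sampling applied to the standard normal driver $Z$. Writing $z_0:=Y_{g,h}^{-1}(-\alpha/\varsigma)$, the monotonicity of $Y_{g,h}$ gives $\{\widetilde{X}>0\}=\{Z>z_0\}$, hence $F_{\widetilde{X}}(0)=\Phi(z_0)$. For $U\sim\mathrm{Uniform}[0,1]$, the affine map $U\mapsto F_{\widetilde{X}}(0)+(1-F_{\widetilde{X}}(0))U$ sends $U$ to a uniform random variable on $[F_{\widetilde{X}}(0),1]$, which, after pushing through $\Phi^{-1}$, is distributed as $Z$ conditional on $\{Z>z_0\}$, i.e.\ on $\{\widetilde{X}>0\}$. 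Applying the map $z\mapsto\alpha+\varsigma Y_{g,h}(z)$ on both sides then shows that $X_U$ in~\eqref{eqn:tr-g-and-h-inv} has the conditional law of $\widetilde{X}$ given $\widetilde{X}>0$, which by Definition~\ref{def:tr-g-and-h} is $\mathrm{Tr\text{-}g\text{-}and\text{-}h}(\alpha,\varsigma,g,h)$.

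For part~\ref{slem:tr-g-and-h3}, the plan is to reduce $\EXP[(X-\gamma)^+]$ to a Gaussian integral. Using part~\ref{slem:tr-g-and-h1} and the fact that $\gamma\ge 0$,
\begin{align*}
\EXP\big[(X-\gamma)^+\big]=\frac{1}{1-F_{\widetilde{X}}(0)}\int_{\gamma}^{\infty}(x-\gamma)\DIFFM{F_{\widetilde{X}}}{\DIFF x}.
\end{align*}
Substituting $x=\alpha+\varsigma Y_{g,h}(z)$, so that $\DIFFM{F_{\widetilde{X}}}{\DIFF x}$ becomes the standard normal density in $z$ and the lower limit becomes $z_\gamma:=Y_{g,h}^{-1}((\gamma-\alpha)/\varsigma)$, the integral splits as
\begin{align*}
(\alpha-\gamma)\bigl(1-\Phi(z_\gamma)\bigr)+\frac{\varsigma}{g}\int_{z_\gamma}^{\infty}\bigl(e^{gz}-1\bigr)e^{hz^{2}/2}\phi(z)\DIFFX{z}.
\end{align*}
The key computation is the remaining Gaussian integral. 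I will combine $e^{hz^{2}/2}\phi(z)=\frac{1}{\sqrt{2\pi}}e^{-(1-h)z^{2}/2}$ and then complete the square in the exponent of $e^{gz-(1-h)z^{2}/2}$ to obtain a shifted Gaussian of variance $1/(1-h)$ and mean $g/(1-h)$, which after the change of variable $u=\sqrt{1-h}\,(z-g/(1-h))$ produces the factor $e^{g^{2}/(2(1-h))}\Phi\bigl((g/(1-h)-z_\gamma)\sqrt{1-h}\bigr)/\sqrt{1-h}$; the $-1$ term produces $\Phi(-z_\gamma\sqrt{1-h})/\sqrt{1-h}$ analogously. Recognising $1-\Phi(z_\gamma)=1-F_{\widetilde{X}}(\gamma)$ and assembling the pieces yields~\eqref{eqn:tr-g-and-h-mom}.

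The main obstacle will be the bookkeeping in the completion of the square in part~\ref{slem:tr-g-and-h3}: one must be careful that $h<1$ is used to guarantee integrability (so that the $h$-tilted Gaussian remains a proper Gaussian), that the variance-adjusted lower limit matches the argument of $\Phi$ appearing in~\eqref{eqn:tr-g-and-h-mom}, and that the prefactor $\varsigma/((1-F_{\widetilde{X}}(0))g\sqrt{1-h})$ is recovered exactly after the change of variable. Parts~\ref{slem:tr-g-and-h1} and~\ref{slem:tr-g-and-h2} are essentially immediate given the monotonicity of $Y_{g,h}$.
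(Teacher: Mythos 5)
Your proposal is correct and follows essentially the same route as the paper: part~(i) by unpacking the conditional probability, part~(ii) by direct verification of the distribution function (your inverse-transform phrasing is just a fleshed-out version of that check), and part~(iii) by the change of variable $x=\alpha+\varsigma Y_{g,h}(z)$ followed by completing the square in the resulting Gaussian integrals. One small inaccuracy: for $h=0$ the map $Y_{g,0}(z)=(e^{gz}-1)/g$ is strictly increasing but maps $\R$ onto $(-1/g,\infty)$ rather than onto $\R$; this does not affect your argument, since only monotonicity and the well-definedness of $Y_{g,h}^{-1}$ at the points actually used (which the lemma's statement presupposes) are needed.
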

\begin{proof}
See Appendix~\ref{apx:proofs}. 
\end{proof}

Suppose that $X^{(t)}$ follows a truncated g-and-h distribution for $t=1,\ldots,T$,
Lemma~\ref{lem:tr-g-and-h}\ref{slem:tr-g-and-h2} allows us to efficiently generate random samples from the severity distributions $F_{X^{(1)}},\ldots,F_{X^{(T)}}$, thus allowing us to approximate the distributions of quantities of interest in Example~\ref{exp:quantities} via Monte Carlo.
Lemma~\ref{lem:tr-g-and-h}\ref{slem:tr-g-and-h3} shows that the assumption~\ref{srmk:comp1} in Remark~\ref{rmk:comp} is satisfied as long as for $t=1,\ldots,T$, the expected value of the frequency distribution in year $t$, i.e., $\EXP[N_t]$, is also tractable. 
Lemma~\ref{lem:tr-g-and-h}\ref{slem:tr-g-and-h1} provides the distribution function of $X^{(t)}$ that can be used to approximate the distribution function of $L(d,W_t)$, for $d\in\CD$ and $t=1,\ldots,T$. Concretely, by adopting the fast Fourier transform (FFT) approach with exponential tilting (see, e.g., \citep{embrechts2009panjer, cruz2015fundamental}), we approximate the distribution function of $L(d,W_t)$, denoted by $F_{L(d,W_t)}$, by a finitely supported discrete distribution $\widehat{F}_{L(d,W_t)}(x)=\sum_{j\in\CA}p^{(t,d)}_j\INDI_{\big\{a^{(t,d)}_j\le x\big\}}$, where $\CA\subset\Z_+$ is a finite index set, $\big(a^{(t,d)}_j\big)_{j\in\CA}\subset\R_+$ is a finite set of atoms and $\big(p^{(t,d)}_j\big)_{j\in\CA}$ are the corresponding probabilities. 
The details of the FFT approach with exponential tilting are shown in Algorithm~\ref{alg:fft}. 
After obtaining $(\widehat{F}_{L(d,W_t)})_{t=1:T,\,d\in\CD}$ from Algorithm~\ref{alg:fft}, the quantities $\EXP\!\left[\INDI_{I}(\lambda^{\CB\CM}(b,t,L(d,W_t)))\big(\lambda^{\CB\CM}(b,t,L(d,W_t))-\alpha\big)^+\!\right]$ and $\PROB\left[\lambda^{\CB\CM}(b,t,L(d,W_t))\in I\right]$ in Remark~\ref{rmk:comp} can be approximated by finite sums:
\begin{align*}
&\EXP\!\left[\INDI_{I}(\lambda^{\CB\CM}(b,t,L(d,W_t)))\big(\lambda^{\CB\CM}(b,t,L(d,W_t))-\alpha\big)^+\!\right]\\
&\qquad\qquad\qquad\qquad\approx\sum_{j\in\CA}p^{(t,d)}_j\!\INDI_{I}\big(\lambda^{\CB\CM}\big(b,t,a^{(t,d)}_j\big)\big)\big(\lambda^{\CB\CM}\big(b,t,a^{(t,d)}_j\big)-\alpha\big)^+,\\
&\PROB\!\left[\lambda^{\CB\CM}(b,t,L(d,W_t))\in I\right]\\
&\qquad\qquad\qquad\qquad\approx\sum_{j\in\CA}p^{(t,d)}_j\!\INDI_{I}\big(\lambda^{\CB\CM}\big(b,t,a^{(t,d)}_j\big)\big).
\end{align*}
One may increase the granularity parameter $K_{\Tg\Tr}$ in Algorithm~\ref{alg:fft} to increase the precision of numerical approximation. 
Consequently, assumptions~\ref{srmk:comp2} and \ref{srmk:comp3} in Remark~\ref{rmk:comp} are satisfied, and hence, Algorithm~\ref{alg:dpconcrete} is tractable and efficient in this setting. In particular, Algorithm~\ref{alg:fft} only needs to be executed once before executing Algorithm~\ref{alg:dpconcrete}. 

\begin{algorithm}[t]
\KwIn{$T$, $\CD$, \big($F_{X^{(t)}}(\cdot)\big)_{t=1:T}$, $\big(\psi_{N_t}(\cdot)\big)_{t=1:T}$, $\gamma(\cdot)$, $\overline{l}$, $K_{\Tg\Tr}\in\N$, $\theta>0$}
\KwOut{$(a^{(t,d)}_j,p_j^{(t,d)})_{j\in\CA,\,t=1:T,\,d\in\CD}$, $\widehat{F}_{L(d,W_t)}(x)=\sum_{j\in\CA}p^{(t,d)}_j\INDI_{\{a^{(t,d)}_j\le x\}}$ for $t=1,\ldots,T$ and each $d\in\CD$}
\nl $\varepsilon\leftarrow(2^{K_{\Tg\Tr}}-1)^{-1}\overline{l}$, $\CA\leftarrow\{0,1,\ldots,2^{K_{\Tg\Tr}}-1\}$. \\
\nl \For{$t=1,\ldots,T$, $d\in\CD$}{
\nl $a^{(t,d)}_j\leftarrow j\epsilon$ for each $j\in\CA$. \\
\nl $f^{(t,d)}_{j}\leftarrow \exp(-j\theta)\left[F_{X^{(t)},d}(j\varepsilon+\frac{1}{2}\varepsilon)-F_{X^{(t)},d}(j\varepsilon-\frac{1}{2}\varepsilon)\right]$ for each $j\in\CA$, where $F_{X^{(t)},d}(y):=F_{X^{(t)}}\big(y+\gamma(d)\big)\INDI_{\{y\ge 0\}}$. \\
\nl $\varphi^{(t,d)}_j\leftarrow\sum_{k\in\CA}\exp(i\pi2^{1-K_{\Tg\Tr}} jk)f^{(t,d)}_k$ for each $j\in\CA$ via the FFT algorithm. \\
\nl $\psi^{(t,d)}_j\leftarrow\psi_{N_t}(\varphi^{(t,d)}_j)$ for each $j\in\CA$. \\
\nl $p^{(t,d)}_j\leftarrow\exp(j\theta)2^{-K_{\Tg\Tr}}\sum_{k\in\CA}\exp(-i\pi2^{1-K_{\Tg\Tr}} jk)\psi^{(t,d)}_k$ for each $j\in\CA$ via the inverse FFT algorithm. \\
}
\nl \Return $(a^{(t,d)}_j,p_j^{(t,d)})_{j\in\CA,\,t=1:T,\,d\in\CD}$, $\widehat{F}_{L(d,W_t)}(x)=\sum_{j\in\CA}p^{(t,d)}_j\INDI_{\{a^{(t,d)}_j\le x\}}$ for $t=1,\ldots,T$ and each $d\in\CD$.\\
    \caption{{\bf Fast Fourier Transform Approach with Exponential Tilting for Approximating $F_{L(d,W_t)}$} (see \citep{embrechts2009panjer})}
    \label{alg:fft}
\end{algorithm}

\section{Numerical Experiment}
\label{sec:exp}

In Section~\ref{sec:stocoptctr} and Section~\ref{sec:g-and-h}, we formulated the optimal cybersecurity provisioning problem as a finite horizon stochastic optimal control problem, and developed a dynamic programming algorithm, i.e., Algorithm~\ref{alg:dpconcrete}, to efficiently solve the problem under the assumption of truncated g-and-h loss severity distributions. Algorithm~\ref{alg:dpconcrete} not only computes the optimal cybersecurity provisioning policy $\pi^\star$ for the insured, but also computes related quantities of interest, including the marginal state occupancy probabilities $\big(\overline{P}^\star_t\big)_{t=0:T}$ of the optimally controlled process $\big(b^{\pi^\star}_t,i^{\pi^\star}_t\big)_{t=0:T}$ as well as other quantities of interest such as those illustrated in Example~\ref{exp:quantities}.
As discussed in Section~\ref{ssec:pricing}, these quantities can guide the insurer when designing a suitable cyber risk insurance contract with a Bonus-Malus system. 
In this section, we demonstrate how Algorithm~\ref{alg:dpconcrete} aids the insurer when designing a cyber risk insurance contract and the benefits of the Bonus-Malus system by a numerical experiment.\footnote{The code used in this work for the experiment is available on GitHub: \url{https://github.com/qikunxiang/CyberInsuranceBonusMalus}} 
In particular, we investigate two aspects of the cyber risk insurance contract with Bonus-Malus discussed in Section~\ref{ssec:pricing}. 
The first aspect is the issue of moral hazard, that is, whether the presence of the cyber risk insurance contract disincentivises the adoption of self-mitigation measures. 
The second aspect is whether the Bonus-Malus system provides benefits to the insurer in terms of increased customer retention rates and expected profits.

\subsection{Experimental Settings}

We assume that all monetary quantities, including the severity of cyber loss events, the insurance premium, and the annual investment required by the self-mitigation measures are adjusted to the scale of the insured (e.g., their average annual revenue) and are unit-free. 
We consider insurance policies that last for 20 years, that is, $T=20$. The discount factor $e^{-r}$ is fixed at 0.95. 

Next, let us specify the frequency distributions and the severity distributions in the cyber loss model.
Existing studies have constructed and calibrated sophisticated frequency and severity models which involve a large number of covariates related to the type, size, and location of an organisation, as well as its internal IT security measures and technologies.
Examples of these models include regression trees \citep{farkas2021cyber}, causal models \citep{skarczinski2022more}, and the Generalised Additive Models for Location Shape and Scale (GAMLSS) \citep{malavasi2022cyber}. 
We would like to remark that calibrating model parameters is challenging in practice due to the scarcity of reliable datasets for cyber losses, as we have discussed in Section~\ref{ssec:challenges}. 
Moreover, calibrating models solely based on historical data is backward looking, and an ideal calibration process should involve historical data, internal data, and expert opinions (in the form of scenario analyses); see \citep[Remark~1]{malavasi2022cyber}.
Nevertheless, since the numerical experiment we are conducting aims to illustrate the workings of our optimal cybersecurity provisioning process and especially to showcase the benefits of the Bonus-Malus system, we consider for simplicity a generic organisation and opt to use static frequency and severity models without any covariates.
We set the frequency distribution in every policy year to be the Poisson distribution with rate 0.8, that is, $N_t\sim\mathrm{Poisson}(0.8)$ for $t=1,\ldots,T$, and we set the severity distribution in every policy year to be $X^{(t)}\sim\text{Tr-}g\text{-and-}h(\mu=0,\varsigma=1,g=1.8,h=0.15)$ for $t=1,\ldots,T$, where the $g$ and $h$ parameters are set to be similar to those estimated in \citep{dutta2006tale} from real Operational Risk data (see Table~8 of \citep{dutta2006tale}). 
We would like to remark that the heaviness of the tail of the loss severity distribution (i.e., the parameter $h$ in the truncated $g$-and-$h$ distribution) determines the probability of extreme risk events and is crucial in the computation of capital estimates \citep{malavasi2022cyber}. 
Therefore, setting the value of the parameter $h$ to be similar to the value estimated by \citet{dutta2006tale} ensures that it is within the range of realistic values for real organisations facing cyber threats.
Moreover, since our cyber loss model is flexible and the code used in the experiment is publicly available on GitHub, future studies can directly use our model and algorithms with fine-tuned frequency and severity distributions that are calibrated to the characteristics of real organisations.

For simplicity, we consider the situation where only a single self-mitigation measure is available, that is, $D=1$. This self-mitigation measure requires an annual investment of 0.5, and has the effect of preventing 70\% of the incoming cyber loss events and decreasing the severity of the remaining events by the 70th percentile of the severity distribution, that is, $\beta(1)=0.5$, $\gamma(1)=F^{-1}_{X}(0.7)$, where $X\sim\text{Tr-}g\text{-and-}h(\mu=0,\varsigma=1,g=1.8,h=0.15)$. 
We consider the following simple cyber risk insurance policy with Bonus-Malus system. Let $\CB=\{-2,-1,0,1\}$, and let the functions $\CB\CM(b_{t-1},C_t)$ and $\CB\CM_0(b_{t-1},i_{t-1})$ be specified in Table~\ref{tab:bm} below.

\begin{table}[h]
\label{tab:bm}
\caption{The $\CB\CM(\cdot,\cdot)$ and $\CB\CM_0(\cdot,\cdot)$ functions that represent the Bonus-Malus update rules}
\begin{center}
\begin{tabular}{|c|c|c|c|}
\hline 
\multicolumn{2}{|c|}{\multirow{2}{*}{$\CB\CM(b_{t-1},C_t)$}} & \multicolumn{2}{|c|}{$C_t$} \\ 
\cline{3-4}
\multicolumn{2}{|c|}{} & $=0$ & $>0$ \\ 
\cline{1-4}
\multirow{4}{*}{$b_{t-1}$} & $-2$ & $-2$ & $1$ \\ 
\cline{2-4}
 & $-1$ & $-2$ & $1$ \\ 
\cline{2-4}
 & $0$ & $-1$ & $1$ \\ 
\cline{2-4}
 & $1$ & $0$ & $1$ \\ 
\hline 
\end{tabular}
\hspace{1cm}
\begin{tabular}{|c|c|c|c|}
\hline 
\multicolumn{2}{|c|}{\multirow{2}{*}{$\CB\CM_0(b_{t-1},i_{t-1})$}} & \multicolumn{2}{|c|}{$i_{t-1}$} \\ 
\cline{3-4}
\multicolumn{2}{|c|}{} & $\text{on}$ & $\text{off}_1$ \\ 
\cline{1-4}
\multirow{4}{*}{$b_{t-1}$} & $-2$ & $(-2,\text{off}_1)$ & $(-1,\text{off}_1)$ \\ 
\cline{2-4}
 & $-1$ & $(-1,\text{off}_1)$ & $(0,\text{off}_1)$ \\ 
\cline{2-4}
 & $0$ & $(0,\text{off}_1)$ & $(0,\text{off}_1)$ \\ 
\cline{2-4}
 & $1$ & $(1,\text{off}_1)$ & $(0,\text{off}_1)$ \\ 
\hline 
\end{tabular}
\end{center}
\end{table}
The above settings mean that when the contract is activated, the insured is migrated to level 1 in the subsequent policy year whenever a claim is made. When the insured does not make any claim in a policy year, their policy is migrated downwards by one level in the subsequent policy years until it reaches level $-2$. When the contract is deactivated, if the insured's policy is in level 1, it is migrated back to level 0 after one year. Otherwise, the policy is migrated upwards by one level each year until it reaches level 0. 
In the experiment, we let the base premium $p^{\CB\CM}_{\text{base}}$ be an adjustable parameter that is varied between 0 and 7 with an increment of $0.005$, and set the premium to be $60\%,80\%,100\%,150\%$ of the base premium for the Bonus-Malus levels $-2,-1,0,1$, respectively. That is, we let $p^{\CB\CM}(-2,t)=0.6p^{\CB\CM}_{\text{base}}$, $p^{\CB\CM}(-1,t)=0.8p^{\CB\CM}_{\text{base}}$, $p^{\CB\CM}(0,t)=p^{\CB\CM}_{\text{base}}$, $p^{\CB\CM}(1,t)=1.5p^{\CB\CM}_{\text{base}}$ for all $t\in\{1,\ldots,T\}$. We fix the maximum compensation to be 1000, that is, $l^{\CB\CM}_{\max}(b,t)=1000$ for all $b\in\CB,t\in\{1,\ldots,T\}$.
We set the deductible to be 0.5 for all but the final policy year, and set the deductible to be 5 for the final policy year, that is, $l^{\CB\CM}_{\Td\Tt\Tb}(b,t)=0.5$ for all $b\in\CB,t\in\{1,\ldots,T-1\}$ and $l^{\CB\CM}_{\Td\Tt\Tb}(b,T)=5$ for all $b\in\CB$. This is to prevent an issue caused by the finite horizon. Since after the final policy year there is no future benefit from the insurance policy and the insured is not incentivised to adopt the self-mitigation measure, a higher deductible is used as the incentive in the final policy year. 
In addition, we let $\delta_{\Ti\Tn}(t)=0.75(t-16)^+$, $\delta_{\To\Tu\Tt}(t)=3+\frac{5}{19}(t-1)$, and $\delta_{\Tr\Te}=3$. This setting has the effect of incentivising the insured to activate the insurance contract early on, and disincentivising withdrawal when close to the final policy year. 
As a baseline for comparison, we also consider another cyber risk insurance policy without the Bonus-Malus system, which can be modelled by letting $\CB=\{0\}$. We fix the premium to be the base premium $p^{\CB\CM}_{\text{base}}$, and leave everything else identical to the policy with the Bonus-Malus system. 
Furthermore, in Algorithm~\ref{alg:fft}, we fix $\overline{l}=10000$, $K_{\Tg\Tr}=20$, $\theta=\frac{20}{2^{K_{\Tg\Tr}}}=3.0518\times10^{-4}$.

\subsection{Results and Discussion}

Figure~\ref{fig:exp1policy} shows the expected number of years the insured's policy spends in each of the Bonus-Malus levels or being uninsured and the expected number of years the insured adopts the self-mitigation measure. The two panels compare the cyber risk insurance contract with the Bonus-Malus system with the one without.
With the contract that does not have the Bonus-Malus system, the decisions of the insured are completely deterministic, that is, they do not depend on the realisation of the cyber loss events. When $p^{\CB\CM}_{\text{base}}\le4.410$, the optimal strategy of the insured is to purchase the cyber risk insurance policy every year and only adopt the self-mitigation measure in the final policy year (due to the higher deductible in the final policy year). When $p^{\CB\CM}_{\text{base}}\ge4.415$, the optimal strategy of the insured is to never purchase the cyber risk insurance policy and always adopt the self-mitigation measure. Therefore, without the Bonus-Malus system, the issue of moral hazard is present and the insured will treat the cyber risk insurance policy and the self-mitigation measure as substitute goods. 
On the other hand, when the Bonus-Malus system is introduced to the cyber risk insurance contract, the decisions of the insured depend on the realisation of the cyber loss events. When $4.495\le p^{\CB\CM}_{\text{base}}\le4.930$, the optimal strategy of the insured is to always purchase the cyber risk insurance policy and adopt the self-mitigation measure. When $4.935\le p^{\CB\CM}_{\text{base}}\le5.050$, the optimal strategy of the insured is to always adopt the self-mitigation measure but withdraw from the insurance contract when the expected future cost exceeds the expected future benefit of the insurance policy. As a result, the retention rate, i.e., the expected proportion of years the insured remains in the contract, drops when the base premium is increased. When $p^{\CB\CM}_{\text{base}}\ge5.055$, the optimal strategy of the insured is to never purchase the cyber risk insurance policy and always adopt the self-mitigation measure. Hence, compared with the contract without Bonus-Malus, the contract with Bonus-Malus incentivises the insured to adopt the self-mitigation measure in addition to purchasing the cyber risk insurance policy. 

\begin{figure}[t]
\centering
\includegraphics[width=0.48\linewidth]{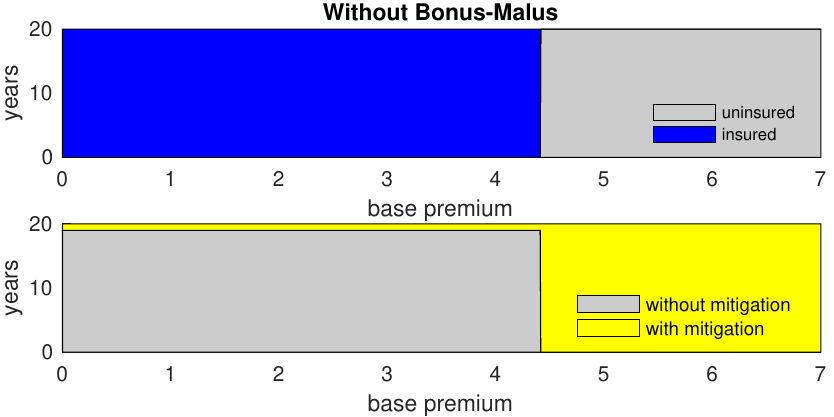}
~
\includegraphics[width=0.48\linewidth]{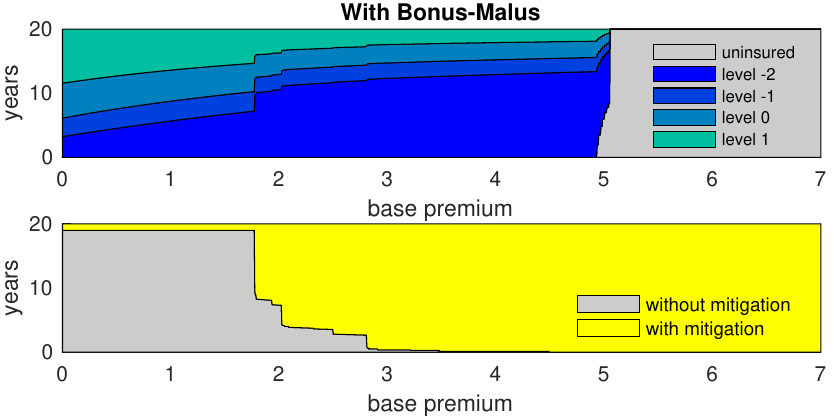}
\caption{The retention rate of the cyber risk insurance policy and the expected years of adoption of the self-mitigation measure versus the base premium.}
\label{fig:exp1policy}
\end{figure}

\begin{figure}[t]
\centering
\includegraphics[width=0.48\linewidth]{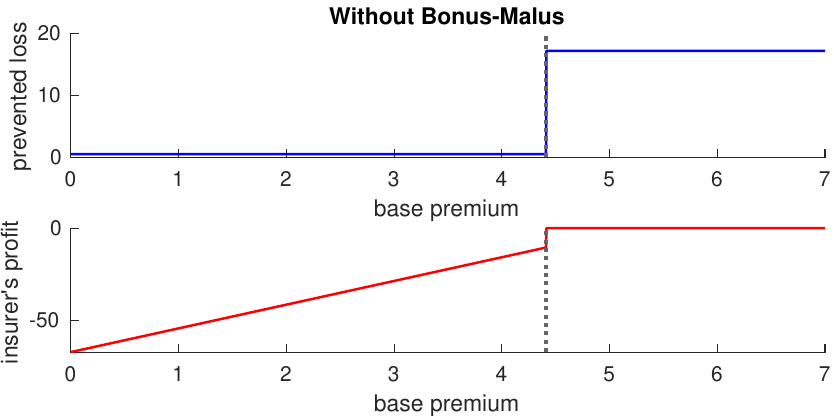}
~
\includegraphics[width=0.48\linewidth]{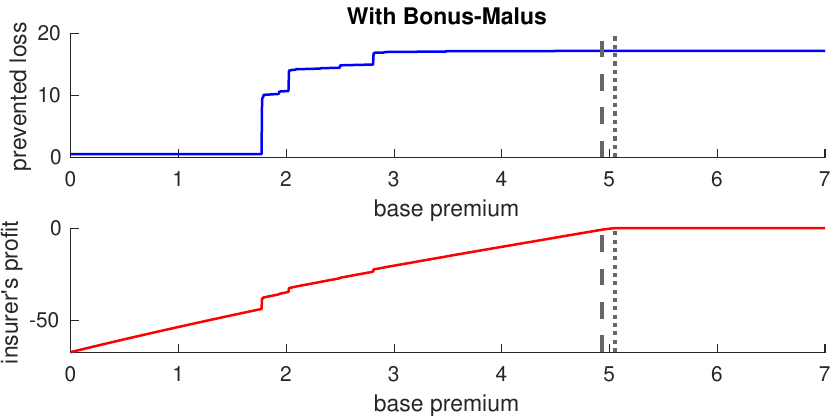}
\caption{The discounted total expected loss prevented by the self-mitigation measure and the discounted expected profit (measured by the quantity $\overline{Z}_{\Ti\Tn\Ts}-\overline{Z}_{\Tc\Tp}$) of the insurer versus the base premium. Left panel: the contract without the Bonus-Malus system. The dashed lines indicate the highest base premium before the insured chooses not to purchase the cyber risk insurance policy. Right panel: the contract with the Bonus-Malus system. The dashed lines indicate the highest base premium before the retention rate drops below 100\%. The dotted lines indicate the highest base premium before the insured chooses not to purchase the cyber risk insurance policy.}
\label{fig:exp1profit}
\end{figure}

Figure~\ref{fig:exp1profit} compares both the expected value of the discounted total loss prevented by the self-mitigation measure and the expected value of the discounted profit of the insurer measured by the quantity $\overline{Z}_{\Ti\Tn\Ts}-\overline{Z}_{\Tc\Tp}$ (defined in Section~\ref{ssec:pricing}) in the two policies. 
As we have discussed in Section~\ref{ssec:pricing}, the quantity $\overline{Z}_{\Ti\Tn\Ts}-\overline{Z}_{\Tc\Tp}$ is always non-positive and its value will be 0 when the insurance policy is not purchased.
Moreover, recall from the discussion in Section~\ref{ssec:pricing} that equating $\overline{Z}_{\Ti\Tn\Ts}-\overline{Z}_{\Tc\Tp}$ with the insurer's expected profit incurs a number of practical concerns.
The left panel of Figure~\ref{fig:exp1profit} shows the case without Bonus-Malus. In that case, when $p^{\CB\CM}_{\text{base}}\le4.410$, the insured will always purchase the cyber risk insurance policy but will only adopt the self-mitigation measure in the final policy year. Hence, the discounted total expected loss prevented stays at $0.505$, while the discounted expected profit of the insurer increases as the base premium increases. When $p^{\CB\CM}_{\text{base}}\ge4.415$, the insured will not purchase the insurance policy but will always adopt the self-mitigation measure. 
%As a result, the discounted total expected loss prevented will be $17.183$ but the insurer will earn no profit. 
As a result, the discounted total expected loss prevented will be $17.183$ but the value of $\overline{Z}_{\Ti\Tn\Ts}-\overline{Z}_{\Tc\Tp}$ will be equal to~0 since the insurance policy is never purchased.
The most the insurer can gain before losing the insured is $-10.510$, when the base premium is set to $4.410$. 
In contrast, in the case with the Bonus-Malus system, as shown in the right panel of Figure~\ref{fig:exp1profit}, the insurer can gain a discounted expected profit of at most $-0.860$ while always retaining the insured (i.e., the insured will never withdraw from the contract), when the base premium is set to $4.930$. The insurer can gain a discounted expected profit of at most $-0.006$ before losing the insured, when the base premium is set to $5.050$. With both of these base premiums, the insured will always adopt the self-mitigation measure, resulting in a discounted total expected loss prevention of~$17.183$. 

Overall, this experiment demonstrates two benefits of the Bonus-Malus system. First, the presence of the Bonus-Malus system in the cyber risk insurance contract incentivises the insured to adopt the self-mitigation measure. This results in a considerable increase in the prevention of cyber losses, which enhances the overall security of the cyberspace. Second, the Bonus-Malus system benefits the insurer, since it allows the insurer to gain more profit from the cyber risk insurance policy while remaining attractive to the insured. 
To show that the observations from the results in this experiment and the conclusions drawn do not depend on the specific choice of the $h$ parameter in the loss severity distribution (which is the most impactful parameter in the truncated g-and-h distribution), and that they also do not depend on our distributional assumption, we have repeated the same experiment with slightly modified settings.
In the first modified setting, the $h$ parameter in the truncated g-and-h distribution is set to $0.10$, $0.20$, or $0.25$. 
In the second modified setting, the loss severity distribution is replaced by a log-normal distribution where the parameters are determined by matching the first two moments to $\text{Tr-}g\text{-and-}h(\mu=0,\varsigma=1,g=1.8,h=0.15)$. 
The results obtained under these modified settings turned out to be very similar to the results in the original experiment.\footnote{The results under these modified settings are available in the online appendix on GitHub: \url{https://github.com/qikunxiang/CyberInsuranceBonusMalus}}
This shows that the benefits of the Bonus-Malus system in the experiment are in fact present across various loss distributional assumptions.

\section{Conclusion}
\label{sec:conclusion}

This paper motivated the joint consideration of risk reduction and risk transfer decisions in the face of cyber threats. We introduced a cyber risk insurance contract with a Bonus-Malus system to provide incentive mechanisms to promote the adoption of cyber risk mitigation practices. We developed a model based on the stochastic optimal control framework to analyse how a rational insured allocates funds between self-mitigation measures and a cyber risk insurance policy. A dynamic programming-based algorithm was then developed to efficiently solve this decision problem. 
A numerical experiment demonstrated that this novel type of insurance contract can incentivise the adoption of self-mitigation measures and can allow the insurer to profit more from the policy while remaining attractive to the insured. 
Future research could investigate the effects of the risk profile, i.e., the characteristics of the loss distribution such as the heaviness of its tail, on the effectiveness of the Bonus-Malus system and how one can tailor Bonus-Malus-based insurance contracts for different risk profiles.  
Even though currently such Bonus-Malus systems for cyber risk insurance are conceptual and not yet adopted by the cyber insurers in the market, the purpose of this paper is to demonstrate that such cyber risk insurance contracts can be considered and further tailored for practice.
Moreover, we hope that the increasing availability of high-quality cyber datasets and further research studies can help to further bridge this gap in the future.
In particular, more research studies like \citep{skarczinski2022more} are needed in order to quantitatively investigate the link between cybersecurity countermeasures and cyber losses, and to aid the joint consideration of cyber risk reduction and risk transfer decisions. 
As suggested by \citet{chase2017scalable}, it might be mutually beneficial for cyber insurers to work more closely with Security-as-a-Service providers.

\section*{Acknowledgments}
\noindent
Ariel Neufeld gratefully acknowledges the financial support by his Nanyang Assistant Professorship Grant (NAP Grant) \emph{Machine Learning based Algorithms in Finance and Insurance}.

\appendix
\section{Proofs}
\label{apx:proofs}

%%%%%%%%%%%%%%%%%%%% Proof of Theorem: Dynamic Programming %%%%%%%%%%%%%%%%%%%%
\begin{proof}[Proof of Theorem~\ref{thm:dp}]
In this proof, we apply the dynamic programming principle and perform backward induction in time to show the optimality of $\pi^\star$.
First, one may check that $d^\star_t,\iota^\star_t$ are $\CF_{t-1}$-measurable, $j^\star_t$ is $\CF_t$-measurable, and ${\{\iota^\star_t=0,j^\star_t=1\}=\emptyset}$ for $t=1,\ldots,T$. Thus, indeed $\pi^\star\in\Pi$.
%First, since ${\{\iota^\star_t=0,j^\star_t=1\}=\emptyset}$ for $t=1,\ldots,T$, one checks that $\pi^\star\in\Pi$ is well-defined. 
For all $\pi=(d_s,\iota_s,j_s)_{s=1:T}\in\Pi$ and $t\in\{0,\ldots,T\}$, let us define $O_t(\pi)=(\widetilde{d}_s,\widetilde{\iota}_s,\widetilde{j}_s)_{s=1:T}\in\Pi$ as follows:
\begin{align}
\begin{split}
&\hspace{-1.55cm}\big(b^{O_t(\pi)}_{0},i^{O_t(\pi)}_{0}\big):=(0,\Tn\To),\\
\text{for each }s&=1,\ldots,t, \text{ let:}\\
&\widetilde{d}_s=d_s,\hspace{2.92cm}\widetilde{\iota}_s=\iota_s, \hspace{2.92cm}\widetilde{j}_s=j_s,\\
\text{for each }s&=t+1,\ldots,T, \text{ let:}\\
&\widetilde{d}_s=\widehat{d}_s\big(b_{s-1}^{O_t(\pi)},i_{s-1}^{O_t(\pi)}\big),\quad\quad \widetilde{\iota}_s=\widehat{\iota}_s\big(b_{s-1}^{O_t(\pi)},i_{s-1}^{O_t(\pi)}\big),\quad\quad \widetilde{j}_s=\widehat{j}_s\big(b_{s-1}^{O_t(\pi)},i_{s-1}^{O_t(\pi)},W_s\big).
\end{split}
\label{eqn:dpconcept-process}
\end{align}
By the definition above, one may check that $O_t(\pi)\in\Pi$ for all $\pi\in\Pi$ and $t=0,\ldots,T$.
%By the same reasons as above, one can verify that $O_t(\pi)\in\Pi$ for all $\pi\in\Pi$ and $t=0,\ldots,T$. 
In particular, when $t=0$, (\ref{eqn:dpconcept-process}) implies that $O_0(\pi)=\pi^\star$ for all $\pi\in\Pi$. In addition, notice that $O_{t+s}\big(O_t(\pi)\big)=O_t(\pi)$ for all $\pi\in\Pi$ and $s\ge0$. 

Next, we prove the following statement by induction: 
\begin{align}
\begin{split}
\text{for all }t=0,\ldots,T,\quad\CV_t\big(b_{t}^{O_t(\pi)},i_{t}^{O_t(\pi)}\big)=V^{O_t(\pi)}_t\le V^\pi_t\quad \PROB\text{-a.s.\ for all }\pi\in\Pi.
\end{split}
\label{eqn:dpinduction}
\end{align}
To begin, we have by definition that $O_T(\pi)=\pi$ and $\CV_T(b^{\pi}_{T},i^{\pi}_{T})=V^{\pi}_T=0$ $\PROB$-a.s.\ for all $\pi\in\Pi$. Hence, (\ref{eqn:dpinduction}) holds when $t=T$. 
Now, let us suppose that for some $t\in\{1,\ldots,T\}$, it holds that $\CV_t\big(b_{t}^{O_t(\pi)},i_{t}^{O_t(\pi)}\big)=V^{O_t(\pi)}_t\le V^\pi_t$ $\PROB$-a.s.\ for all $\pi\in\Pi$. 
Let $\pi=(d_s,\iota_s,j_s)_{s=1:T}\in\Pi$ be arbitrary and let $O_{t-1}(\pi):=(\widetilde{d}_s,\widetilde{\iota}_s,\widetilde{j}_s)_{s=1:T}\in\Pi$. 
Note that (\ref{eqn:dpconcept-j}) ensures that for every $b\in\CB,i\in\CI,w\in\CW$,
\begin{align}
\begin{split}
&\phantom{=}\;\;g_t(b,i,\widehat{d}_t(b,i),\widehat{\iota}_t(b,i),\widehat{j}_t(b,i,w),w)+\CV_t\big(f_t(b,i,\widehat{d}_t(b,i),\widehat{\iota}_t(b,i),\widehat{j}_t(b,i,w),w)\big)\\
&=\min_{j\in\{0,1\}}\Big\{g_t(b,i,\widehat{d}_t(b,i),\widehat{\iota}_t(b,i),j,w)+\CV_t\big(f_t(b,i,\widehat{d}_t(b,i),\widehat{\iota}_t(b,i),j,w)\big)\Big\}.
\end{split}
\label{eqn:dpproof-j-optimality}
\end{align}
Combining (\ref{eqn:dpproof-j-optimality}) with the definition of $\widehat{d}_t$ and $\widehat{\iota}_t$ in (\ref{eqn:dpconcept-d-iota}), one can show that for all $b\in\CB$, $i\in\CI$, $d\in\CD$, $\iota\in\{0,1\}$, and $\FB(\CW)$-measurable $j:\CW\to\{0,1\}$, it holds that
\begin{align}
\begin{split}
&\phantom{=}\;\;\CV_{t-1}(b,i)\\
&=e^{-r}\EXP\bigg[g_{t}(b,i,\widehat{d}_t(b,i),\widehat{\iota}_t(b,i),\widehat{j}_t(b,i,W_t),W_t)+\CV_{t}\big(f_{t}(b,i,\widehat{d}_t(b,i),\widehat{\iota}_t(b,i),\widehat{j}_t(b,i,W_t),W_t)\big)\bigg]\\
&\le e^{-r}\EXP\bigg[g_{t}(b,i,d,\iota,j(W_t),W_t)+\CV_{t}\big(f_{t}(b,i,d,\iota,j(W_t),W_t)\big)\bigg]. 
\end{split}
\label{eqn:dpoptimality}
\end{align}
By (\ref{eqn:dpoptimality}), (\ref{eqn:dpconcept-process}), the independence between $\CF_{t-1}$ and $\sigma(W_t)$, and the induction hypothesis, it holds $\PROB$-a.s.\ that
\begin{align}
\begin{split}
&\phantom{=}\;\;\CV_{t-1}\big(b^{O_{t-1}(\pi)}_{t-1},i^{O_{t-1}(\pi)}_{t-1}\big)\\
&=e^{-r}\EXP\bigg[g_{t}(b,i,\widehat{d}_t(b,i),\widehat{\iota}_t(b,i),\widehat{j}_t(b,i,W_t),W_t)\\
&\qquad\qquad\quad+\CV_{t}\big(f_{t}(b,i,\widehat{d}_t(b,i),\widehat{\iota}_t(b,i),\widehat{j}_t(b,i,W_t),W_t)\big)\bigg]\bigg|_{\substack{b=b^{O_{t-1}(\pi)}_{t-1},\;i=i^{O_{t-1}(\pi)}_{t-1}}}\\
&=e^{-r}\EXP\bigg[g_{t}(b^{O_{t-1}(\pi)}_{t-1},i^{O_{t-1}(\pi)}_{t-1},\widehat{d}_t(b^{O_{t-1}(\pi)}_{t-1},i^{O_{t-1}(\pi)}_{t-1}),\widehat{\iota}_t(b^{O_{t-1}(\pi)}_{t-1},i^{O_{t-1}(\pi)}_{t-1}),\\
&\qquad\qquad\quad\widehat{j}_t(b^{O_{t-1}(\pi)}_{t-1},i^{O_{t-1}(\pi)}_{t-1},W_t),W_t)+\CV_{t}\big(f_{t}(b^{O_{t-1}(\pi)}_{t-1},i^{O_{t-1}(\pi)}_{t-1},\widehat{d}_t(b^{O_{t-1}(\pi)}_{t-1},i^{O_{t-1}(\pi)}_{t-1}),\\
&\qquad\qquad\quad\widehat{\iota}_t(b^{O_{t-1}(\pi)}_{t-1},i^{O_{t-1}(\pi)}_{t-1}),\widehat{j}_t(b^{O_{t-1}(\pi)}_{t-1},i^{O_{t-1}(\pi)}_{t-1},W_t),W_t)\big)\bigg|\CF_{t-1}\bigg]\\
&=e^{-r}\EXP\bigg[g_{t}\big(b^{O_{t-1}(\pi)}_{t-1},i^{O_{t-1}(\pi)}_{t-1},\widetilde{d}_t,\widetilde{\iota}_t,\widetilde{j}_t,W_t\big)+\CV_{t}\big(f_{t}\big(b^{O_{t-1}(\pi)}_{t-1},i^{O_{t-1}(\pi)}_{t-1},\widetilde{d}_t,\widetilde{\iota}_t,\widetilde{j}_t,W_t\big)\big)\bigg|\CF_{t-1}\bigg]\\
&=e^{-r}\EXP\bigg[g_{t}\big(b^{O_{t-1}(\pi)}_{t-1},i^{O_{t-1}(\pi)}_{t-1},\widetilde{d}_t,\widetilde{\iota}_t,\widetilde{j}_t,W_t\big)+\CV_{t}\big(b^{O_t(O_{t-1}(\pi))}_{t},i^{O_t(O_{t-1}(\pi))}_{t}\big)\bigg|\CF_{t-1}\bigg]\\
&=e^{-r}\EXP\bigg[g_{t}\big(b^{O_{t-1}(\pi)}_{t-1},i^{O_{t-1}(\pi)}_{t-1},\widetilde{d}_t,\widetilde{\iota}_t,\widetilde{j}_t,W_t\big)+V^{O_{t-1}(\pi)}_{t}\bigg|\CF_{t-1}\bigg]\\
&=V^{O_{t-1}(\pi)}_{t-1}.
\end{split}
\label{eqn:dpproof-step1}
\end{align}

Now, let $\pi=(d_s,\iota_s,j_s)_{s=1:T}\in\Pi$ be arbitrary and let $O_{t-1}(\pi):=(\widetilde{d}_s,\widetilde{\iota}_s,\widetilde{j}_s)_{s=1:T}\in\Pi$. 
By (\ref{eqn:dpconcept-process}), we have 
\begin{align}
\big(b^\pi_{t-1},i^\pi_{t-1}\big)=\big(b^{O_{t-1}(\pi)}_{t-1},i^{O_{t-1}(\pi)}_{t-1}\big)=\big(b^{O_t(\pi)}_{t-1},i^{O_t(\pi)}_{t-1}\big)\quad\PROB\text{-a.s.}
\label{eqn:dpproof-identity}
\end{align}
By (\ref{eqn:dpproof-step1}), (\ref{eqn:dpproof-j-optimality}), (\ref{eqn:dpproof-identity}), the independence between $\CF_{t-1}$ and $\sigma(W_t)$, and the induction hypothesis, it holds $\PROB$-a.s.\ that
\begin{align}
\begin{split}
&\phantom{=}\;\;V^{O_{t-1}(\pi)}_{t-1}\\
&=e^{-r}\EXP\bigg[\min_{j\in\{0,1\}}\Big\{g_{t}(b,i,\widehat{d}_t(b,i),\widehat{\iota}_t(b,i),j,W_t)+\CV_{t}\big(f_{t}(b,i,\widehat{d}_t(b,i),\widehat{\iota}_t(b,i),j,W_t)\big)\Big\}\bigg]\bigg|_{\substack{b=b^{O_{t-1}(\pi)}_{t-1},\;i=i^{O_{t-1}(\pi)}_{t-1}}}\\
&\le e^{-r}\EXP\bigg[\min_{j\in\{0,1\}}\Big\{g_{t}(b,i,d_t,\iota_t,j,W_t)+\CV_{t}\big(f_{t}(b,i,d_t,\iota_t,j,W_t)\big)\Big\}\bigg]\bigg|_{\substack{b=b^{O_{t-1}(\pi)}_{t-1},\;i=i^{O_{t-1}(\pi)}_{t-1}}}\\
&=e^{-r}\EXP\bigg[\min_{j\in\{0,1\}}\Big\{g_{t}\big(b^{O_{t-1}(\pi)}_{t-1},i^{O_{t-1}(\pi)}_{t-1},d_t,\iota_t,j,W_t\big)+\CV_{t}\big(f_{t}\big(b^{O_{t-1}(\pi)}_{t-1},i^{O_{t-1}(\pi)}_{t-1},d_t,\iota_t,j,W_t\big)\big)\Big\}\bigg|\CF_{t-1}\bigg]\\
&\le e^{-r}\EXP\bigg[g_{t}\big(b^{O_{t-1}(\pi)}_{t-1},i^{O_{t-1}(\pi)}_{t-1},{d}_t,{\iota}_t,{j}_t,W_t\big)+\CV_{t}\big(f_{t}\big(b^{O_{t-1}(\pi)}_{t-1},i^{O_{t-1}(\pi)}_{t-1},{d}_t,{\iota}_t,{j}_t,W_t\big)\big)\bigg|\CF_{t-1}\bigg]\\
%&=e^{-r}\EXP\bigg[g_{t}\big(b^{\pi}_{t-1},i^{\pi}_{t-1},{d}_t,{\iota}_t,{j}_t,W_t\big)+\CV_{t}\big(f_{t}\big(b^{O_t(\pi)}_{t-1},i^{O_t(\pi)}_{t-1},{d}_t,{\iota}_t,{j}_t,W_t\big)\big)\bigg|\CF_{t-1}\bigg]\\
&=e^{-r}\EXP\bigg[g_{t}\big(b^{\pi}_{t-1},i^{\pi}_{t-1},{d}_t,{\iota}_t,{j}_t,W_t\big)+\CV_{t}\big(b^{O_t(\pi)}_{t},i^{O_t(\pi)}_{t}\big)\bigg|\CF_{t-1}\bigg]\\
&\le e^{-r}\EXP\bigg[g_{t}\big(b^{\pi}_{t-1},i^{\pi}_{t-1},{d}_t,{\iota}_t,{j}_t,W_t\big)+V_t^{\pi}\bigg|\CF_{t-1}\bigg]\\
&=V_{t-1}^{\pi}.
\end{split}
\label{eqn:dpproof-step2}
\end{align}
Combining (\ref{eqn:dpproof-step1}) and (\ref{eqn:dpproof-step2}), we have shown that $\CV_{t-1}\big(b_{t-1}^{O_{t-1}(\pi)},i_{t-1}^{O_{t-1}(\pi)}\big)=V^{O_{t-1}(\pi)}_{t-1}\le V^\pi_{t-1}$ $\PROB$-a.s.\ for all $\pi\in\Pi$. 
By induction, (\ref{eqn:dpinduction}) holds for $t=0$. 
Hence, $\CV_0\big(b^{\pi^\star}_0,i^{\pi^\star}_0\big)=V^{\pi^\star}_0\le V^{\pi}_0$ for all $\pi\in\Pi$. Since $(b^{\pi^\star}_0,i^{\pi^\star}_0\big)=(0,\Tn\To)$, we have $\CV_0\big(b^{\pi^\star}_0,i^{\pi^\star}_0\big)=V^{\pi^\star}_0=\inf_{\pi\in\Pi}V^{\pi}_0=\BBV_0$.
The proof is now complete.
\end{proof}

%%%%%%%%%%%%%%%%%%%% Proof of Theorem: Numerical Algorithm %%%%%%%%%%%%%%%%%%%%
\begin{proof}[Proof of Theorem~\ref{thm:dpalgo}]
To prove statement~\ref{sthm:dpalgo1}, it suffices to show that \underline{Claim~1} and \underline{Claim~2} in Remark~\ref{rmk:dpalgo} hold true. 
If these two claims hold, then one can verify that $\widehat{d}_t(b,i),\widehat{\iota}_t(b,i),\widehat{j}_t(b,i,w)$ defined on Line~\ref{alglin:dp-d-iota-def} and Line~\ref{alglin:dp-j-def} coincide with the definitions (\ref{eqn:dpconcept-d-iota}) and (\ref{eqn:dpconcept-j}), and thus statement~\ref{sthm:dpalgo1} holds as a consequence of Theorem~\ref{thm:dp}.
In \underline{Claim~1}, by (\ref{eqn:socstate}) and (\ref{eqn:soccost}), it holds that
\begin{align}
\begin{split}
g_{t}(b,i,d,1,1,w)+\CV_{t}\big(f_{t}(b,i,d,1,1,w)\big)&< g_{t}(b,i,d,1,0,w)+\CV_{t}\big(f_{t}(b,i,d,1,0,w)\big)\\
&\Updownarrow\\
\CV_{t}\big(\CB\CM(b,\lambda^{\CB\CM}(b,t,L(d,w)),\To\Tn\big)-\lambda^{\CB\CM}(b,t,L(d,w))&<\CV_{t}\big(\CB\CM(b,0),\To\Tn\big).
\end{split}
\label{eqn:dpalgo-claim1-proof1}
\end{align}
Observe that, by the definitions of $\underline{b},\overline{b}$ on Line~\ref{alglin:dp-b-b-def} and the definition of $\alpha_t(b,b')$ on Line~\ref{alglin:dp-L-def}, we have
\begin{align}
\begin{split}
&\phantom{=}\;\;\Big\{c\in\R_+:\CV_{t}\big(\CB\CM(b,c),\To\Tn\big)-c<\CV_{t}\big(\CB\CM(b,0),\To\Tn\big)\Big\}\\
&=\bigcup_{\underline{b}\le b'\le\overline{b}}\Big\{c\in\R_+:\CB\CM(b,c)=b',c>\CV_{t}(b',\To\Tn)-\CV_{t}(\underline{b},\To\Tn)\Big\}\\
&=\bigcup_{\underline{b}\le b'\le\overline{b}}\CL_t(b,b').
\end{split}
\label{eqn:dpalgo-claim1-proof2}
\end{align}
Hence, \underline{Claim~1} holds true.
In \underline{Claim~2}, in the case where $\iota=1$, we have, by (\ref{eqn:socstate}) and (\ref{eqn:soccost}), that
\begin{align}
\begin{split}
&\phantom{=}\;\;\EXP\bigg[\min_{j\in\{0,1\}}\Big\{g_{t}(b,i,d,\iota,j,W_t)+\CV_{t}\big(f_{t}(b,i,d,\iota,j,W_t)\big)\Big\}\bigg]\\
&=\EXP\Big[\beta(d)+\iota p^{\CB\CM}(b,t)+\delta_{\Ti\Tn}(t)\INDI_{\{i=\Tn\To,\iota=1\}}+\delta_{\To\Tu\Tt}(t)\INDI_{\{i=\To\Tn,\iota=0\}}+\delta_{\Tr\Te}\INDI_{\{i\ne\To\Tn,i\ne\Tn\To,\iota=1\}}+L(d,W_t)\Big]\\
&\qquad+\EXP\bigg[\min_{j\in\{0,1\}}\Big\{\CV_{t}\big(\CB\CM(b,j\lambda^{\CB\CM}(b,t,L(d,W_t))),\To\Tn\big)-\iota j\lambda^{\CB\CM}(b,t,L(d,W_t))\Big\}\bigg]\\
&=\beta(d)+\iota p^{\CB\CM}(b,t)+\delta_{\Ti\Tn}(t)\INDI_{\{i=\Tn\To,\iota=1\}}+\delta_{\To\Tu\Tt}(t)\INDI_{\{i=\To\Tn,\iota=0\}}+\delta_{\Tr\Te}\INDI_{\{i\ne\To\Tn,i\ne\Tn\To,\iota=1\}}+\EXP\big[L(d,W_t)\big]\\
&\qquad+\EXP\Big[\CV_{t}\big(\CB\CM(b,0),\To\Tn\big) \wedge \Big(\CV_{t}\big(\CB\CM(b,\lambda^{\CB\CM}(b,t,L(d,W_t))),\To\Tn\big)-\lambda^{\CB\CM}(b,t,L(d,W_t))\Big)\Big].
\end{split}
\label{eqn:dpalgo-claim2-proof1}
\end{align}
Moreover, by the definition of $H_t(b,i,d,\iota)$ on Line~\ref{alglin:dp-H-def1}, the definitions of $\underline{b},\overline{b}$ on Line~\ref{alglin:dp-b-b-def}, and the definition of $\alpha_t(b,b')$ on Line~\ref{alglin:dp-L-def}, we have
\begin{align}
\begin{split}
&\phantom{=}\;\;\EXP\Big[\CV_{t}\big(\CB\CM(b,0),\To\Tn\big) \wedge \Big(\CV_{t}\big(\CB\CM(b,\lambda^{\CB\CM}(b,t,L(d,W_t))),\To\Tn\big)-\lambda^{\CB\CM}(b,t,L(d,W_t))\Big)\Big]\\
&=\CV_{t}\big(\CB\CM(b,0),\To\Tn\big)\\
&\qquad-\EXP\bigg[\Big(\lambda^{\CB\CM}(b,t,L(d,W_t))-\CV_{t}\big(\CB\CM(b,\lambda^{\CB\CM}(b,t,L(d,W_t))),\To\Tn\big)+\CV_{t}\big(\CB\CM(b,0),\To\Tn\big)\Big)^+\bigg]\\
&=\CV_{t}(\underline{b},\To\Tn)\\
&\qquad-\sum_{\underline{b}\le b'\le\overline{b}}\EXP\bigg[\INDI_{\{\CB\CM(b,\lambda^{\CB\CM}(b,t,L(d,W_t)))=b'\}}\Big(\lambda^{\CB\CM}(b,t,L(d,W_t))-\big[\CV_{t}(b',\To\Tn)-\CV_{t}(\underline{b},\To\Tn)\big]\Big)^+\bigg]\\
&=H_t(b,i,d,1).
\end{split}
\label{eqn:dpalgo-claim2-proof2}
\end{align}
In the case where $\iota=0$, \underline{Claim~2} follows directly from (\ref{eqn:socstate}) and (\ref{eqn:soccost}). 
Thus, \underline{Claim~2} holds true.

Now, let us prove statement~\ref{sthm:dpalgo2}. Let $(b,i)\in\CB\times\CI$ be fixed, let $\underline{b},\overline{b}$ be defined by Line~\ref{alglin:dp-b-b-def}, and let $\CL_t(b,b')$ be defined by Line~\ref{alglin:dp-L-def}. It follows from (\ref{eqn:socstate}) that, if $\widehat{\iota}_t(b,i)=0$, then
\begin{align*}
\PROB\big[\big(b^{\pi^\star}_{t},i^{\pi^\star}_{t}\big)=\CB\CM_0(b,i)\big|b^{\pi^\star}_{t-1}=b,i^{\pi^\star}_{t-1}=i\big]=1=P^\star_t\big[(b,i)\rightarrow\CB\CM_0(b,i)\big],
\end{align*}
thus showing the correctness of Line~\ref{alglin:dp-kern3}. 
Now, suppose that $\widehat{\iota}_t(b,i)=1$. Then, by (\ref{eqn:socstate}), we have that $\PROB\big[i^{\pi^\star}_{t}=\To\Tn\big|b^{\pi^\star}_{t-1}=b,i^{\pi^\star}_{t-1}=i\big]=1$. Let us first examine the case where $b'\ne\underline{b}$. We then have
\begin{align*}
&\phantom{=}\;\;\Big\{\big(b^{\pi^\star}_{t},i^{\pi^\star}_{t}\big)=(b',\To\Tn), \big(b^{\pi^\star}_{t-1},i^{\pi^\star}_{t-1}\big)=(b,i) \Big\}\\
&=\Big\{\CB\CM\Big(b,\lambda^{\CB\CM}(b,t,L(d^\star_t,W_t))\Big)=b'\Big\}\cap\big\{j^\star_t=1\big\} \cap \Big\{\big(b^{\pi^\star}_{t-1},i^{\pi^\star}_{t-1}\big)=(b,i)\Big\}\\
&=\Big\{\CB\CM\Big(b,\lambda^{\CB\CM}(b,t,L(\widehat{d}_t(b,i),W_t))\Big)=b'\Big\}\cap\Big\{\widehat{j}_t(b,i,W_t)=1\Big\}\cap \Big\{\big(b^{\pi^\star}_{t-1},i^{\pi^\star}_{t-1}\big)=(b,i)\Big\}\\
&=\Big\{\CB\CM\Big(b,\lambda^{\CB\CM}(b,t,L(\widehat{d}_t(b,i),W_t))\Big)=b'\Big\} \cap \Big\{\lambda^{\CB\CM}(b,t,L(\widehat{d}_t(b,i),W_t))\in\textstyle\bigcup_{\underline{b}\le b''\le\overline{b}}\CL_t(b,b'')\Big\} \\
&\qquad\cap \Big\{\big(b^{\pi^\star}_{t-1},i^{\pi^\star}_{t-1}\big)=(b,i)\Big\}\\
&=\Big\{\lambda^{\CB\CM}(b,t,L(\widehat{d}_t(b,i),W_t))\in\CL_t(b,b')\Big\} \cap \Big\{\big(b^{\pi^\star}_{t-1},i^{\pi^\star}_{t-1}\big)=(b,i)\Big\},
\end{align*}
where the first equality is by (\ref{eqn:socstate}), the second equality is by (\ref{eqn:dpconcept-opt-process}), the third equality is by \underline{Claim~1} in Remark~\ref{rmk:dpalgo}, and the last equality is by Line~\ref{alglin:dp-L-def} and the property that $\{\CL_t(b,b''):\underline{b}\le b''\le\overline{b}\}$ are disjoint sets. Therefore, since $\big(b^{\pi^\star}_{t-1},i^{\pi^\star}_{t-1}\big)$ and $W_t$ are independent, we have for any $b'\ne\underline{b}$ that
\begin{align*}
\PROB\Big[b^{\pi^\star}_{t}=b',i^{\pi^\star}_{t}=\To\Tn,b^{\pi^\star}_{t-1}=b,i^{\pi^\star}_{t-1}=i\Big]=\PROB\Big[\lambda^{\CB\CM}(b,t,L(\widehat{d}_t(b,i),W_t))\in\CL_t(b,b')\Big]\PROB\Big[b^{\pi^\star}_{t-1}=b,i^{\pi^\star}_{t-1}=i\Big].
\end{align*}
Hence, by Line~\ref{alglin:dp-kern1},
\begin{align*}
\PROB\Big[b^{\pi^\star}_{t}=b',i^{\pi^\star}_{t}=\To\Tn\Big|b^{\pi^\star}_{t-1}=b,i^{\pi^\star}_{t-1}=i\Big]&=\PROB\Big[\lambda^{\CB\CM}(b,t,L(\widehat{d}_t(b,i),W_t))\in\CL_t(b,b')\Big]\\
&=P^\star_t\big[(b,i)\rightarrow(b',\To\Tn)\big].
\end{align*}
The remaining case where $b'=\underline{b}$ follows from
\begin{align*}
\PROB\Big[\big(b^{\pi^\star}_{t},i^{\pi^\star}_{t}\big)=(\underline{b},\To\Tn)\Big|\big(b^{\pi^\star}_{t-1},i^{\pi^\star}_{t-1}\big)=(b,i)\Big]&=1-\sum_{\underline{b}<b'\le\overline{b}}\PROB\Big[\big(b^{\pi^\star}_{t},i^{\pi^\star}_{t}\big)=(b',\To\Tn)\Big|\big(b^{\pi^\star}_{t-1},i^{\pi^\star}_{t-1}\big)=(b,i)\Big]\\
&=1-\sum_{\underline{b}<b'\le\overline{b}}P^\star_t\big[(b,i)\rightarrow(b',\To\Tn)\big]\\
&=P^\star_t\big[(b,i)\rightarrow(\underline{b},\To\Tn)\big],
\end{align*}
thus verifying the correctness of Line~\ref{alglin:dp-kern2}.
The correctness of Line~\ref{alglin:dp-marg-init} and Line~\ref{alglin:dp-marg-iter} follows from the definition that $\big(b^{\pi^\star}_{0},i^{\pi^\star}_{0}\big)=(0,\Tn\To)$ and basic properties of a finite state Markov chain. The proof of statement~\ref{sthm:dpalgo2} is complete. 

Finally, statement~\ref{sthm:dpalgo3} also follows from the basic properties of a finite state Markov chain. The proof is complete. 
\end{proof}

%%%%%%%%%%%%%%%%%%%% Proof of Lemma: g-and-h Distribution
\begin{proof}[Proof of Lemma~\ref{lem:tr-g-and-h}]
Statement~(i) follows by checking the following:
\begin{align*}
\begin{split}
F_{X}(x)=\PROB[\widetilde{X}\le x|\widetilde{X}>0]=\frac{\PROB[0<\widetilde{X}\le x]}{\PROB[\widetilde{X}>0]}=\begin{cases}
\frac{F_{\widetilde{X}}(x)-F_{\widetilde{X}}(0)}{1-F_{\widetilde{X}}(0)} & \text{if }x>0,\\
0 & \text{if }x\le0.
\end{cases}
\end{split}
\end{align*}
Statement~(ii) can be verified directly by checking that $\PROB[X_U\le x]=F_X(x)$ for all $x\in\R$. 

Finally, statement~(iii) can be derived from (\ref{eqn:tr-g-and-h-df}) as follows:
\begin{align*}
\begin{split}
&\phantom{=}\;\;\EXP\big[(X-\gamma)^+\big]\\
&=\int_{\gamma}^{\infty}(x-\gamma) \DIFFM{F_{X}}{\DIFF x}\\
&=\frac{1}{1-F_{\widetilde{X}}(0)}\left[\int_{\gamma}^{\infty}x\DIFFM{F_{\widetilde{X}}}{\DIFF x}-\gamma(1-F_{\widetilde{X}}(\gamma))\right]\\
&=\frac{\varsigma}{1-F_{\widetilde{X}}(0)}\int_{Y_{g,h}^{-1}\left(\frac{\gamma-\mu}{\varsigma}\right)}^{\infty}Y_{g,h}(z)\DIFFM{\Phi}{\DIFF z}+\frac{(\mu-\gamma)(1-F_{\widetilde{X}}(\gamma))}{1-F_{\widetilde{X}}(0)}\\
&=\frac{\varsigma}{1-F_{\widetilde{X}}(0)}\frac{1}{g}\int_{Y_{g,h}^{-1}\left(\frac{\gamma-\mu}{\varsigma}\right)}^{\infty}(\exp(gz)-1)\exp\left(\frac{hz^2}{2}\right)\frac{1}{\sqrt{2\pi}}\exp\left(-\frac{z^2}{2}\right)\DIFFX{z}+\frac{(\mu-\gamma)(1-F_{\widetilde{X}}(\gamma))}{1-F_{\widetilde{X}}(0)}\\
&=\frac{\varsigma}{1-F_{\widetilde{X}}(0)}\frac{1}{g\sqrt{2\pi}}\Bigg[\int_{Y_{g,h}^{-1}\left(\frac{\gamma-\mu}{\varsigma}\right)}^{\infty}\exp\left(-\frac{(1-h)z^2}{2}+gz\right)-\exp\left(-\frac{(1-h)z^2}{2}\right)\DIFFX{z}\Bigg]+\frac{(\mu-\gamma)(1-F_{\widetilde{X}}(\gamma))}{1-F_{\widetilde{X}}(0)}\\
&=\frac{\varsigma}{1-F_{\widetilde{X}}(0)}\frac{1}{g\sqrt{2\pi}}\Bigg[\int_{Y_{g,h}^{-1}\left(\frac{\gamma-\mu}{\varsigma}\right)}^{\infty}\exp\left(\frac{g^2}{2(1-h)}\right)\exp\left(-\frac{(1-h)}{2}\left(z-\frac{g}{1-h}\right)^2\right)\DIFFX{z}\\
&\qquad\qquad\qquad\qquad\qquad\qquad\qquad-\int_{Y_{g,h}^{-1}\left(\frac{\gamma-\mu}{\varsigma}\right)}^{\infty}\exp\left(-\frac{(1-h)z^2}{2}\right)\DIFF{z}\Bigg]+\frac{(\mu-\gamma)(1-F_{\widetilde{X}}(\gamma))}{1-F_{\widetilde{X}}(0)}\\
&=\frac{\varsigma}{(1-F_{\widetilde{X}}(0))g\sqrt{1-h}}\Bigg[\exp\left(\frac{g^2}{2(1-h)}\right)\Phi\left(\left(\frac{g}{1-h}-Y_{g,h}^{-1}\left(\tfrac{\gamma-\mu}{\varsigma}\right)\right)\sqrt{1-h}\right)\\
&\qquad\qquad\qquad\qquad\qquad\qquad\qquad-\Phi\left(-Y_{g,h}^{-1}\left(\tfrac{\gamma-\mu}{\varsigma}\right)\sqrt{1-h}\right)\Bigg]+\frac{(\mu-\gamma)(1-F_{\widetilde{X}}(\gamma))}{1-F_{\widetilde{X}}(0)},
\end{split}
\end{align*}
where the last equality is obtained by noticing that both integrals are Gaussian integrals after a change of variable. 
The proof is now complete. 
\end{proof}

\bibliographystyle{abbrvnat}
\bibliography{references}

\end{document}